\newcommand{\whatsuit}{{\diamondsuit}}
\newtheorem{theorem}{Theorem}[section]
\newtheorem{example}[theorem]{Example}
\newtheorem{problem}[theorem]{Problem}
\newtheorem{question}[theorem]{Question}
\newtheorem{conjecture}[theorem]{Conjecture}
\newtheorem{proposition}[theorem]{Proposition}
\newtheorem{lemma}[theorem]{Lemma}
\title{Computational complexity, Newton polytopes, and Schubert polynomials}
\author{Anshul Adve\thanks{\href{mailto:aadve@g.ucla.edu}{aadve@g.ucla.edu}.}\addressmark{1},  Colleen Robichaux\thanks{\href{mailto:cer2@illinois.edu}{cer2@illinois.edu}. Colleen Robichaux was supported by the National Science Foundation Graduate Research Fellowship Program under Grant No. DGE 1746047.
}\addressmark{2}\and Alexander Yong\thanks{\href{ayong@illinois.edu}{ayong@illinois.edu}. Alexander Yong was supported by an NSF grant, a Simons Collaboration grant and a UIUC Campus Research Board grant.
}\addressmark{2}}
\address{\addressmark{1} UCLA, Los Angeles, CA 90095 \\ \addressmark{2} Dept.~of Mathematics, U.~Illinois at Urbana-Champaign, Urbana, IL 61801, USA}
\abstract{The {\sf nonvanishing} problem asks if a coefficient of a polynomial is nonzero.
Many families of polynomials in algebraic combinatorics admit combinatorial counting rules and simultaneously enjoy having \emph{saturated Newton polytopes} (SNP). Thereby, in amenable cases, {\sf nonvanishing} is in the complexity class ${\sf NP}\cap {\sf coNP}$ of problems with ``good characterizations''. This suggests a new algebraic combinatorics viewpoint on complexity theory.

This paper focuses on the case of \emph{Schubert polynomials}. These form a basis of all polynomials and appear in the study of cohomology rings of flag manifolds. We give a tableau criterion for {\sf nonvanishing}, 
from which we deduce the first polynomial time algorithm. These results are obtained from new characterizations of the \emph{Schubitope}, a generalization of the  permutahedron defined for any
subset of the $n\times n$ grid, together with a theorem of A.~Fink, K.~M\'{e}sz\'aros, and A.~St. Dizier (2018), which proved a conjecture of C.~Monical, N.~Tokcan, and the third author (2017).
}
\keywords{Schubert polynomials, Newton polytopes, computational complexity}
\begin{document}

\maketitle
%% note that you DO NOT have to put your abstract here -- it is generated by \maketitle and the \abstract and \resume commands above
\section{Introduction}

The main results of this extended abstract of \cite{ARYadv,ARY} concern Schubert polynomials; these are found in Section~\ref{sec:introschub}. Those results illustrate a general
algebraic combinatorics paradigm for computational complexity theory that we wish to put forward here.

\subsection{Nonvanishing decision problems and SNP}
Algebraic combinatorics studies families of polynomials parameterized by combinatorial objects ${\whatsuit}$
\[
F_{\whatsuit} =\sum_{\alpha\in\mathbb{Z}^n_{\geq0}}c_{\alpha,\whatsuit}x^{\alpha}=\sum_{s\in {\mathcal S}}{\sf wt}(s)\in {\mathbb Z}[x_1,x_2,\ldots,x_n],
\]
each viewed as the multivariate weight generating series for some combinatorially defined set
${\mathcal S}$.

\begin{example}[Schur polynomials]
\label{exa:Schur}
$F_{\whatsuit}=s_{\lambda}$ is a \emph{Schur polynomial}, where $\whatsuit=\lambda$
is an integer partition. 
%The classical definition is
%$s_{\lambda}=a_{\lambda+\delta}/a_{\delta} \text{ \ \ where $a_{\gamma}:=\det(x_{i}^{\gamma_j})_{i,j=1}^n$}$ 
%and $\delta=(n-1,n-2,\ldots,2,1,0)$. 
%This definition establishes symmetry of $s_{\lambda}$, but not the property $c_{\alpha,\lambda}\in {\mathbb Z}_{\geq 0}$. 
Here,
${\mathcal S}$ is the set of \emph{semistandard Young tableaux} of shape $\lambda$ with entries in $[n]$, and ${\sf wt}(s)=\prod_i x_i^{\#i\in s}$. Schur polynomials are an important
basis of the vector space of all symmetric polynomials.
\end{example}

\begin{example}[Stanley's chromatic symmetric polynomial]
Another symmetric polynomial is Stanley's \emph{chromatic polynomial} $F_{\whatsuit}=\chi_{G}$ \cite{Sta95}. This time $\whatsuit=G=(V,E)$ is a simple
graph, ${\mathcal S}$ is the set of proper $n$-colorings of $G$, i.e., functions $s:V\to \{1,2,\ldots,n\}$ such that $s(i)\neq s(j)$ if $\{i,j\}\in E$, and ${\sf wt}(s)=\prod_i x_i^{\#s^{-1}(i)}$.
\end{example}

\begin{example}[Schubert polynomials]
The central example  of this paper is non-symmetric. It is the family of \emph{Schubert polynomials} $F_{\whatsuit}={\mathfrak S}_w$, a basis of all polynomials.
Now, $\whatsuit=w$ is a permutation. There are many choices for ${\mathcal S}$, such as the \emph{reduced compatible sequences} of \cite{BJS}. Definitions are given in Section~\ref{sec:introschub}.
\end{example}

\begin{problem}[{\sf nonvanishing}]
\label{prob:nonvanishing}
What is the  complexity of deciding $c_{\alpha,\whatsuit}\neq 0$,
as measured in the input size of $\alpha$ and $\whatsuit$ (under the assumption that arithmetic operations take constant time)?
\end{problem}

In this paper, we give a polynomial time algorithm to determine $c_{\alpha,w}\neq 0$ for the Schubert polynomial.
In general, {\sf nonvanishing} may be undecidable: fix $S \subseteq \mathbb{N}$ that is not recursively enumerable, and let $F_m = \sum_{i=1}^m c_{i,m} x^m$ with $c_{i,m} = 1$ if $i \in S$ and $0$ otherwise. Such sets $S$ exist because there are uncountably many subsets of $\mathbb{N}$, but only countably many algorithms. One can explicitly take $S$ to be the set of halting Turing machines under some numerical encoding \cite{Turing}, or the set of G\"odel encodings \cite{Godel} of statements about $({\mathbb N},+,\times)$ provable in first-order Peano arithmetic. All this said, in our cases of interest, $c_{\alpha,\whatsuit}\in {\mathbb Z}_{\geq 0}$ has \emph{combinatorial positivity}: it is given by a counting rule that implies {\sf nonvanishing} 
is in the class ${\sf NP}$ of problems
with a polynomial time checkable certificate of a {\tt YES} decision. 

Evidently, {\sf nonvanishing} concerns the \emph{Newton polytope},
\[{\sf Newton}(F_{\whatsuit})={\rm conv}\{\alpha:c_{\alpha,\whatsuit}\neq 0\} \subseteq {\mathbb R}^n.\]
C.~Monical, N.~Tokcan and the third author \cite{MTY}  showed that for many examples, $F_{\whatsuit}$ has \emph{saturated Newton polytope} (SNP),  i.e.,
$\gamma\in {\sf Newton}(F_{\whatsuit}) \cap {\mathbb Z}^n \iff c_{\gamma,\whatsuit}\neq 0.$ 
The relevance of SNP to Problem~\ref{prob:nonvanishing}
is:
\begin{quotation}
SNP $\Rightarrow$ {\sf nonvanishing}$(F_{\whatsuit})$ is equivalent to  checking membership of a lattice point in 
${\sf Newton}(F_{\whatsuit})$.
\end{quotation}

\begin{example}[{\sf nonvanishing}$(s_{\lambda})$ is in ${\sf P}$]
\label{exa:inP}
${\sf Newton}(s_{\lambda})$ is the \emph{$\lambda$-permutahedron} ${\mathcal P}_{\lambda}$, the convex hull of the $S_n$-orbit of $\lambda\in {\mathbb R}^n$. 
By symmetry one may assume $\alpha$ is a partition. Thus $c_{\alpha,\lambda}$ is the \emph{Kostka coefficient}, and $c_{\alpha,\lambda}=0$ if and only if $\alpha\leq \lambda$ in \emph{dominance order}. 
So {\sf nonvanishing}$(s_{\lambda})$ is in the class {\sf P} of polynomial time problems.
\end{example}

Does the ``niceness'' of combinatorial positivity and SNP transfer to complexity?

\begin{question}
\label{question:meta}
Under what conditions does combinatorial positivity and SNP of $\{F_{\whatsuit}\}$
imply ${\sf nonvanishing}(F_{\whatsuit})\in{\sf P}$, or at least that ${\sf nonvanishing}(F_{\whatsuit})\not\in {\sf NP}$-{\sf complete}?
\end{question}
 
On the other hand, $\chi_G$ is not generally SNP \cite{MTY} and ${\sf nonvanishing}(\chi_G)$ is hard:

\begin{example}[$\chi_G$-nonvanishing is ${\sf NP}$-{\sf complete}]
\label{exa:inNPcomplete}
For $\chi_G$, nonvanishing is clearly in ${\sf NP}$. In fact, for each fixed $n\geq 3$ it is 
${\sf NP}$-${\sf complete}$. The $n$-coloring problem of deciding if a graph has an $n$-proper coloring is ${\sf NP}$-${\sf complete}$ for each fixed $n\geq 3$. Given an efficient oracle to solve ${\sf nonvanishing}(\chi_G)$, call it 
on each partition of $|V|$ with $n$ parts to decide if there exists a proper $n$-coloring. This requires only
$O(|V|^n)$ calls, so it is a polynomial reduction of $n$-coloring to ${\sf nonvanishing}(\chi_G)$.
\end{example}

\subsection{Context from computer science; connection to Stanley's Schur positivity conjecture}
\label{sec:context}
Examples~\ref{exa:inP} and~\ref{exa:inNPcomplete} show that 
{\sf nonvanishing} can achieve the extremes of ${\sf NP}$. What about the non-extremes? 

The class ${\sf NP}$-{\sf intermediate} consists of ${\sf NP}$ problems that are neither in ${\sf P}$
nor ${\sf NP}$-{\sf complete}. \emph{Ladner's theorem} states that if ${\sf P}\neq {\sf NP}$ there exists an (artificial) ${\sf NP}$-{\sf intermediate} problem. 
Many natural problems from algebra, number theory, game theory and combinatorics 
are \emph{suspected} to be ${\sf NP}$-{\sf intermediate}. An example is the \emph{Graph Isomorphism problem}.

The class {\sf coNP} consists of problems whose complements are in {\sf NP}, i.e., those with a polynomial time checkable certificate of a {\tt NO} decision. 

\begin{quotation}
SNP $\Rightarrow$ given a halfspace description of the Newton polytope, an inequality violation checkable in polynomial time
gives a {\sf coNP} certificate.
\end{quotation}

 The above  implication of SNP says that any solution $\{F_{\whatsuit}\}$ to the following problem
gives ${\sf nonvanishing}(F_{\whatsuit})\in {\sf NP}\cap {\sf coNP}$.

\begin{problem}
\label{prob:findconp}
For a combinatorially positive family of SNP polynomials $\{F_{\whatsuit}\}$,
determine half space descriptions of ${\sf Newton}(F_{\whatsuit})$.
\end{problem}

The class ${\sf NP}\cap {\sf coNP}$ is intriguing.
Membership of a  problem in ${\sf NP}\cap {\sf coNP}$ sometimes foreshadows the harder proof that it is in ${\sf P}$.
For example, consider 
\[\text{${\sf primes}$ $=$ ``is a positive integer $n$ prime?''}\] 
Clearly, ${\sf primes}\in {\sf coNP}$.
In 1975, V.~Pratt \cite{Pratt} showed ${\sf primes}\in{\sf NP}$. It was about thirty years before the celebrated discovery of the \emph{AKS primality test} of M.~Agrawal, N.~Kayal, and 
N.~Saxena \cite{AKS}, establishing ${\sf primes}\in {\sf P}$.

In retrospect, another example is the \emph{linear programming} problem 
\[\text{${\sf LPfeasibility}$ $=$ ``is $A{\bf x}= b, {\bf x}\geq 0$ feasible?''}\]
Clearly ${\sf LPfeasibility}\in{\sf NP}$. Membership in {\sf coNP} is a consequence of \emph{Farkas' Lemma} (1902), which is a foundation for
LP duality, conjectured by J.~von Neumann and proved by G.~Dantzig in 1948 (cf.~\cite{Dantzig}). 
Yet, it was not until 1979, with L.~Khachiyan's work on the \emph{ellipsoid method} that ${\sf LPfeasibility}\in {\sf P}$ was proved; see, e.g., 
the textbook \cite{Schrijver}.

These examples suggest ${\sf P}={\sf NP}\cap {\sf coNP}$. 
However, one has \emph{integer factorization} 
\[\text{${\sf factorization}$ $=$ ``given $1<a<b$ does there exist a divisor $d$ of
$b$ where $1\leq d\leq a$?''}\]
An {\sf NP} certificate is a divisor. A {\sf coNP} certificate is a 
prime factorization (verified using the AKS test). Most 
public key cryptography (such as RSA) relies on ${\sf P}\neq {\sf NP}\cap {\sf coNP}$. 

The debate ${\sf P}\stackrel{?}{=}{\sf NP}\cap {\sf coNP}$ may be rephrased as 
``are problems with good characterizations in {\sf P}?''. One wants new examples of members of ${\sf NP}\cap{\sf coNP}$ that are not known to be in {\sf P}. If such examples are proved to be in ${\sf P}$, this adds evidence for ``$=$''. Yet, relatively few examples are known. In addition to integer
factorization, one has (decision) \emph{Discrete Log}, \emph{Stochastic Games} \cite{Condon}, 
\emph{Parity Games} \cite{Parity} and \emph{Lattice Problems} \cite{AR05}. (It is open whether \emph{Graph Isomorphism} is in ${\sf coNP}$.)
We now connect this discussion with Example~\ref{exa:inNPcomplete}. 

\begin{problem}
\label{prob:colorsub}
Does restricting to a subclass of graphs $G$ where $\chi_G$ is SNP (or Schur positive) change the complexity of $n$-coloring?
\end{problem}

% In 1995 R.~P.~Stanley \cite{Sta95} conjectured
% if $G$ is claw-free (i.e., it contains no induced $K_{1,3}$ subgraph), then $\chi_G$ is Schur positive. In 2018 C.~Monical \cite{Monical:thesis} conjectured
% if $\chi_G$ is Schur positive, then it is SNP.
% Combining these two conjectures gives
% \begin{conjecture}
% \label{conj:combined}
% If $G$ is claw-free then $\chi_G$ is SNP.
% \end{conjecture}

\begin{conjecture}[R.~P.~Stanley \cite{Sta95}]
If $G$ is claw-free (i.e., it contains no induced $K_{1,3}$ subgraph), then $\chi_G$ is Schur positive.
\end{conjecture}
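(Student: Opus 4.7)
The plan is a hybrid attack combining Gasharov's known special case with structural graph theory. Gasharov proved that for any $(3+1)$-free poset $P$, the chromatic symmetric function $\chi_{\mathrm{inc}(P)}$ of its incomparability graph is Schur positive via an explicit positive expansion indexed by $P$-tableaux. Since the incomparability graphs of $(3+1)$-free posets are exactly the graphs that are simultaneously claw-free and $(K_3 \sqcup K_1)$-free, this already handles a substantial claw-free subclass. The remaining task is to extend Schur positivity to claw-free graphs containing an induced $K_3 \sqcup K_1$.

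To handle general claw-free $G$, I would invoke the Chudnovsky--Seymour structure theorem, which decomposes every claw-free graph into basic pieces (line graphs of bipartite multigraphs, circular interval graphs, and induced subgraphs of the Schl\"afli graph) assembled by a bounded list of composition operations (2-joins, thickenings, and hex-chain expansions among them). The argument would proceed in three stages: (i) verify Schur positivity directly for each basic class, seeking combinatorial models that generalize Gasharov's $P$-tableaux; for line graphs $L(H)$, the bijection between proper vertex colorings of $L(H)$ and proper edge colorings of $H$ suggests looking for a sign-reversing involution on edge colorings that realizes a positive Schur expansion. (ii) Prove that each Chudnovsky--Seymour composition operation preserves Schur positivity at the level of $\chi_G$. (iii) Conclude by induction on decomposition depth.

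The main obstacle, by far, is stage (ii). Schur positivity is notoriously fragile under graph-theoretic operations, and most claw-free composition operations relate $\chi_G$ to pieces in nonlinear ways that do not respect any fixed symmetric function basis. The persistence of the stronger Stanley--Stembridge $e$-positivity conjecture, still open even in the $(3+1)$-free case, underscores how delicate basis-level positivity is compared to structural results. In my view, any successful proof will ultimately require a representation-theoretic or geometric interpretation of $\chi_G$ for claw-free $G$, analogous to the Shareshian--Wachs/Brosnan--Chow realization of $\chi_G$ via cohomology of Hessenberg varieties for unit interval graphs; producing such an interpretation across the full Chudnovsky--Seymour hierarchy is, I believe, the real content of the conjecture, and no such framework is presently available outside the unit interval regime.
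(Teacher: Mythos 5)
This statement is stated in the paper only as an open conjecture attributed to Stanley~\cite{Sta95}; the paper offers no proof, and indeed the conjecture remains open. Your submission is likewise not a proof but a research program, and you say so yourself: you lay out three stages and then identify stage~(ii) --- showing that the Chudnovsky--Seymour composition operations preserve Schur positivity of $\chi_G$ --- as an obstacle you have no means of overcoming. That honest assessment is correct, but it means nothing has been proved.

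A few substantive comments on the proposed plan. First, the classification of Gasharov's theorem is slightly off: the incomparability graphs of $(3+1)$-free posets are exactly the $(3+1)$-free graphs' incomparability graphs, but the claim that these are ``exactly the graphs that are simultaneously claw-free and $(K_3\sqcup K_1)$-free'' is not right; not every claw-free, $(K_3\sqcup K_1)$-free graph is an incomparability graph (one also needs the graph to be perfect, or more precisely a cocomparability graph), so Gasharov's result covers less than you suggest. Second, and more fundamentally, $\chi_G$ is a global invariant of $G$ and does not factor or decompose cleanly along 2-joins, thickenings, or hex-chain expansions --- there is no known formula relating $\chi_G$ to the chromatic symmetric functions of the pieces in any of these operations, so stage~(ii) is not merely hard but currently has no candidate lemma to prove. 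Absent such a lemma, the structure-theorem route does not get off the ground, and the correct conclusion is the one you reach at the end: the conjecture is open, and this proposal does not resolve it.
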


\begin{conjecture}[C.~Monical \cite{Monical:thesis}]
If $\chi_G$ is Schur positive, then it is SNP.
\end{conjecture}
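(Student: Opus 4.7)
The plan is to combine Schur positivity with the fact that each Schur polynomial $s_\lambda$ is SNP with Newton polytope equal to the permutahedron $\mathcal{P}_\lambda$. Writing $\chi_G=\sum_\lambda c_\lambda s_\lambda$ with $c_\lambda\geq 0$, positivity prevents monomial cancellation, giving
\[
{\sf Newton}(\chi_G)={\rm conv}\Bigl(\bigcup_{\lambda:\,c_\lambda>0}\mathcal{P}_\lambda\Bigr).
\]
For SNP, every lattice point $\alpha$ of this polytope must have positive $\chi_G$-coefficient, which equals $\sum_\lambda c_\lambda K_{\alpha,\lambda}$. As $\chi_G$ is symmetric and ${\sf Newton}(\chi_G)$ is $S_n$-stable, I may take $\alpha$ to be a partition; then (as in Example~\ref{exa:inP}) $K_{\alpha,\lambda}>0$ iff $\alpha\leq\lambda$ in dominance. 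So SNP is equivalent to:
\begin{claim*}
For every partition $\alpha\in{\rm conv}\bigl(\bigcup_{\lambda:\,c_\lambda>0}\mathcal{P}_\lambda\bigr)$, there exists $\lambda$ with $c_\lambda>0$ and $\alpha\leq\lambda$.
\end{claim*}

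\textbf{Reduction to a partition-theoretic lemma.} Writing $\alpha=\sum_\lambda t_\lambda p_\lambda$ with $p_\lambda\in\mathcal{P}_\lambda$, $t_\lambda\geq 0$, $\sum_\lambda t_\lambda=1$, Rado's theorem gives that each $p_\lambda$ rearranged decreasingly is $\leq\lambda$ in dominance; the decreasing-rearrangement map is subadditive in dominance (a diagonal case of Ky Fan's inequality), so $\alpha=\text{sort}(\alpha)\leq\sum_\lambda t_\lambda\lambda$ in dominance order. The claim therefore reduces to the following arithmetic-combinatorial assertion: \emph{if a partition $\alpha$ of $n$ is dominated by a convex combination of partitions $\lambda_i$ of $n$, then $\alpha\leq\lambda_j$ for some single $j$.} Small-case checks (e.g., for the incomparable pair $(4,1,1,0),(3,3,0,0)$ of partitions of $6$) support this; the structural reason appears to be the concavity --- i.e., the decreasing-differences property --- of the partial-sum vectors of partitions, which prevents the downward closure of the convex hull from strictly exceeding the union of the downward closures at integer points.

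\textbf{Main obstacle.} The hard part is proving this partition-theoretic lemma in full generality. A natural attack is Helly-type reasoning on the sets $J_m=\{i:\sum_{j=1}^m\alpha_j\leq\sum_{j=1}^m(\lambda_i)_j\}$ for varying $m$, whose structure is tightly constrained by the concavity of the partial-sum vectors; one could exploit that dominance order on partitions of $n$ is a lattice (Brylawski). Failing that, one can bypass the Schur decomposition entirely and work monomially: since the coefficient of $x^\alpha$ in $\chi_G$ counts proper colorings of $G$ with color-content $\alpha$, SNP amounts to producing a proper coloring with content $\alpha$ for every lattice $\alpha\in{\sf Newton}(\chi_G)$, a task one could attempt by Kempe-chain-type modifications of colorings realizing extremal points. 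Either route requires genuinely new combinatorial input --- no combinatorial rule is known for the Schur expansion of $\chi_G$ even in the conjecturally Schur-positive claw-free case --- which is presumably why the conjecture has remained open.
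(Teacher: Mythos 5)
This statement is labeled a \emph{conjecture} in the paper (attributed to C.~Monical's thesis); the paper does not prove it, so there is no proof to compare your attempt against. That said, your attempt has a fatal flaw that is worth spelling out, because it shows this line of attack cannot succeed even in principle.

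Your reductions up to the partition-theoretic lemma are sound: with Schur positivity there is no monomial cancellation, so ${\sf Newton}(\chi_G)={\rm conv}(\bigcup_{c_\lambda>0}\mathcal{P}_\lambda)$; by symmetry one may take $\alpha$ to be a partition; by Rado's theorem and the subadditivity (in dominance) of the decreasing-rearrangement map, a partition $\alpha$ lies in that polytope if and only if $\alpha\leq\sum t_\lambda\lambda$ in dominance for some convex combination. So SNP of $\chi_G$ is \emph{equivalent} to your boxed lemma applied to the set of $\lambda$'s with $c_\lambda>0$. The problem is that the lemma is \emph{false}. Take $\lambda=(10,10,4)$ and $\mu=(12,6,6)$, two incomparable partitions of $24$, and set $\alpha=\tfrac12\lambda+\tfrac12\mu=(11,8,5)$, also a partition of $24$. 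The partial-sum vectors are $(10,20,24)$, $(12,18,24)$, and $(11,19,24)$ respectively, so $\alpha\not\leq\lambda$ (since $11>10$) and $\alpha\not\leq\mu$ (since $19>18$), yet $\alpha$ is literally a convex combination of $\lambda$ and $\mu$. (A four-part example with smaller $n$: $\lambda=(6,6,4,0)$, $\mu=(8,4,2,2)$, $\alpha=(7,5,3,1)=\tfrac12\lambda+\tfrac12\mu$, with $n=16$.) Your small check at $n=6$ failed to catch this only because the gaps between partial sums are too small to fit an extra integer until $n$ grows.

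This kills the approach at its root, not merely the particular Helly-type or lattice-theoretic attack you propose for the lemma. Because your reduction uses nothing about $\chi_G$ beyond Schur positivity, a successful proof along these lines would show that \emph{every} Schur-positive symmetric polynomial is SNP. But $F=s_{(10,10,4)}+s_{(12,6,6)}$ is Schur positive, has Newton polytope ${\rm conv}(\mathcal{P}_{(10,10,4)}\cup\mathcal{P}_{(12,6,6)})$ containing the lattice point $(11,8,5)$, and $[x^{(11,8,5)}]F=K_{(10,10,4),(11,8,5)}+K_{(12,6,6),(11,8,5)}=0+0=0$. So the stronger statement is already false. Any correct proof of Monical's conjecture must exploit genuinely special structure of chromatic symmetric functions --- constraints on \emph{which} collections of $\lambda$'s can simultaneously appear with $c_\lambda>0$ --- and not Schur positivity alone. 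Your closing acknowledgment that new combinatorial input is needed is accurate; the point is that it is needed not just to finish the lemma but to replace it.
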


Combining these two conjectures gives
\begin{conjecture}
\label{conj:combined}
If $G$ is claw-free then $\chi_G$ is SNP.
\end{conjecture}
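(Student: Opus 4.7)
The plan is to prove the conjecture by pursuing two approaches in parallel: a composition route that separately tackles the two subsidiary conjectures it combines (Stanley's claw-free Schur positivity and Monical's Schur-positive-implies-SNP), and a direct route that establishes SNP of $\chi_G$ without passing through Schur positivity. Both promise difficulty, but they partition the difficulty differently.

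On the composition route, I would attack Stanley's conjecture using the Chudnovsky--Seymour structure theorem for claw-free graphs, which decomposes any such graph into ``basic'' pieces---fuzzy line graphs, circular-interval graphs, three-cliqued antiprismatic graphs, and so on---glued by a short list of composition operations. Gasharov's theorem already yields Schur positivity for incomparability graphs of $(3{+}1)$-free posets, which is one basic family. The program is then to (i) verify Schur positivity on each remaining basic family, and (ii) show that every Chudnovsky--Seymour composition preserves Schur positivity of $\chi_G$. The principal obstacle is (ii): the composition operations are graph-theoretic and do not manifestly respect the symmetric-function structure. Monical's conjecture would then still be needed to conclude SNP.

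On the direct route, work with the monomial expansion $\chi_G = \sum_\lambda t_\lambda(G) \, m_\lambda$ where $t_\lambda(G)$ counts stable partitions of $V(G)$ of type $\lambda$. Setting $T(G) = \{\lambda : t_\lambda(G) > 0\}$, one has
\[
{\sf Newton}(\chi_G) \;=\; {\rm conv}\Bigl(\bigcup_{\lambda \in T(G)} {\mathcal P}_\lambda\Bigr).
\]
Because ${\mathcal P}_\mu \subseteq {\mathcal P}_\lambda$ whenever $\mu \leq \lambda$ in dominance, it suffices to show that $T(G)$ has a dominance maximum $\lambda^\star$, so that the Newton polytope collapses to the integer-saturated permutahedron ${\mathcal P}_{\lambda^\star}$. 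The key sub-claim is thus: for claw-free $G$, the stable partition types $T(G)$ admit a (necessarily unique) dominance maximum, expected to be produced by a greedy procedure that iteratively removes a well-chosen maximum stable set. I would attempt this via an exchange argument: given stable partitions of types $\lambda$ and $\mu$, use augmenting-path exchanges on pairs of parts---in the spirit of Minty--Sbihi's polynomial-time maximum stable set algorithm for claw-free graphs---to transfer vertices between parts and produce a partition dominating both. The hard part is that augmenting-path theorems for claw-free graphs are tailored to enlarge a single stable set, not to balance a whole partition; a coordinated exchange may require new combinatorial ideas, or a polyhedral argument using the explicit description of the claw-free stable set polytope due to Eisenbrand, Oriolo, Stauffer, and Ventura.
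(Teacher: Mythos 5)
This statement is not proved in the paper: it is an open conjecture, recorded there purely as the formal conjunction of Stanley's claw-free Schur positivity conjecture and Monical's ``Schur positive $\Rightarrow$ SNP'' conjecture, both themselves open. Your composition route therefore reproduces the paper's derivation at the logical level and then adds a (reasonable, but speculative) plan to attack Stanley's conjecture via the Chudnovsky--Seymour structure theorem. As a research program this is sensible, and your identification of the hard point --- that the composition operations do not manifestly respect the symmetric-function structure --- is apt. But, as you concede, it is a plan, not a proof.

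Your direct route contains a substantive gap beyond the one you flag. You assert that it suffices to show $T(G)$ has a dominance maximum $\lambda^\star$, since then ${\sf Newton}(\chi_G) = \mathcal{P}_{\lambda^\star}$. The equality of polytopes is correct (Rado), but SNP does not follow from it. SNP demands that \emph{every} lattice point of $\mathcal{P}_{\lambda^\star}$ appear as an exponent vector of $\chi_G$. The lattice points of $\mathcal{P}_{\lambda^\star}$ are exactly the rearrangements of partitions $\mu$ with $\mu \leq \lambda^\star$ in dominance, so you would additionally need $T(G)$ to be \emph{downward-closed} in dominance order. This is a claim independent of the existence of a dominance maximum, it is false for general $G$ (already for $K_{1,3}$ one has $(3,1) \in T$ but $(2,2) \notin T$), and so it must genuinely invoke claw-freeness --- yet your proposal never addresses it. The exchange/augmenting-path machinery you cite is oriented toward producing the dominance maximum, not toward filling in the entire lower order ideal; a separate argument (or a reduction to the Schur basis, which would land you back on Monical's conjecture via \cite[Proposition~2.5]{MTY}) is needed to close this hole.
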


If ${\sf coNP}$ contains an ${\sf NP}$-{\sf complete} problem then 
${\sf NP}={\sf coNP}$ \cite{Goldreich}, solving an open problem with ``$=$''.\footnote{In this circumstance, the (complexity) polynomial hierarchy unexpectedly collapses to the first level.} Now, by \cite{Hol81}, $n$-coloring claw-free graphs is
${\sf NP}$-complete.  Therefore:
\begin{quotation}
If  Conjecture~\ref{conj:combined} holds, Problem~\ref{prob:colorsub} and Question~\ref{question:meta} are answered
negatively. Moreover, a
 solution to Problem~\ref{prob:findconp} 
 proves {\sf nonvanishing}$(\chi_{\text{claw-free\ } G})$ is {\sf coNP}, and hence ${\sf NP}={\sf coNP}$. 
\end{quotation}
This suggests a new complexity-theoretic rationale for the study of  $\chi_G$.

\subsection{An algebraic combinatorics paradigm for complexity}
Summarizing, we are motivated by complexity to study {\sf nonvanishing} in algebraic combinatorics.  Many
polynomial families $\{F_{\whatsuit}\}$ have combinatorial positivity and (conjecturally) SNP \cite{MTY}. 
Together, with a solution to Problem~\ref{prob:findconp}, ${\sf nonvanishing}\in{\sf NP}\cap {\sf coNP}$. 

For each family $\{F_{\whatsuit}\}$, one arrives at one of four logical outcomes, depending on
 the complexity of 
{\sf nonvanishing}$(F_{\whatsuit})$ within ${\sf NP}\cap {\sf coNP}$:

\begin{itemize}
\item[(I)] Unknown: it is a problem, in and of itself, to find additional problems that are in ${\sf NP}\cap {\sf coNP}$ that are not \emph{known} to be in ${\sf P}$.
\item[(II)] {\sf P}: Give an algorithm. It will likely illuminate some special structure, of independent combinatorial interest.
\item[(III)] {\sf NP}-{\sf complete}: proof solves ${\sf NP}\stackrel{?}{=}{\sf coNP}$ with (a suprising) ``$=$''.
\item[(IV)] {\sf NP}-{\sf intermediate}: proof solves {\sf NP}-{\sf intermediate}$\stackrel{?}{=} \emptyset$ with ``$\neq$'' (hence
${\sf P}\neq {\sf NP}$).
\end{itemize}
 
  Our main results in Section~\ref{sec:introschub} illustrate (II) for Schubert polynomials.

\section{Main results: Schubert polynomials}\label{sec:introschub}
\emph{Schubert polynomials} form a  linear basis of all polynomials ${\mathbb Z}[x_1,x_2,x_3,\ldots]$. They were introduced by A.~Lascoux--M.-P.~Sch\"{u}tzenberger \cite{LS1} to study the cohomology ring of the flag manifold. These polynomials
represent the Schubert classes under the Borel isomorphism. A reference is the textbook \cite{Fulton:YT}.

The Schubert polynomial ${\mathfrak S}_{w}(x_1,\ldots,x_n)$ is defined recursively for any permutation $w\in S_n$ as follows.
If $w_0=n \ n-1 \ \cdots 2 \ 1 $ is the longest length permutation 
in $S_n$, then 
\[{\mathfrak S}_{w_0}(x_1,\ldots,x_n):=x_1^{n-1}x_2^{n-2}\cdots x_{n-1}.\] 
Otherwise, $w\neq w_0$ and 
there exists $i$ such that $w(i)<w(i+1)$. Then one sets
\[{\mathfrak S_w}(x_1,\ldots,x_n)=\partial_i {\mathfrak S}_{ws_i}(x_1,\ldots,x_n),\] 
where
$\partial_i f:= \frac{f-s_if}{x_i-x_{i+1}}$,
and $s_i$ is the transposition swapping $i$ and $i+1$. 
Since $\partial_i$ satisfies
\[\partial_i\partial_j=\partial_j\partial_i \text{\ for $|i-j|>1$, and \ } 
\partial_i\partial_{i+1}\partial_i=\partial_{i+1}\partial_i\partial_{i+1},\]
the above description of ${\mathfrak S}_w$ is well-defined. In addition,
under the inclusion 
$\iota: S_n\hookrightarrow S_{n+1}$ defined by
$w(1)\cdots w(n) \mapsto w(1) \ \cdots w(n) \ n+1$,
${\mathfrak S}_w={\mathfrak S}_{\iota(w)}$. 
Thus one unambiguously
refers to ${\mathfrak S}_w$ for each $w\in S_{\infty}=\bigcup_{n\geq 1} S_n$. 

	To each $w\in S_\infty$ there is a unique \emph{code}, 
	${\sf code}(w)=(c_1,c_2,\ldots,c_L)\in {\mathbb Z}_{\geq 0}^L$,
where $c_i$ counts the number of boxes in the $i$-th row of the Rothe diagram $D(w)$ of $w$. If $w$ is the identity then ${\sf code}(w)=\emptyset$;
otherwise,  $c_L>0$ 
(i.e., we truncate any trailing zeroes).

	Now, $c_{\alpha,w}=0$ unless $\alpha_i=0$ for $i>L$, and moreover, $c_{\alpha,w}\in {\mathbb Z}_{\geq 0}$.
Let {\sf Schubert} be the nonvanishing problem for Schubert polynomials. The {\tt INPUT} is ${\sf code}=(c_1,\ldots c_L)\in {\mathbb Z}_{\geq 0}^L$ with $c_L>0$
and $\alpha\in {\mathbb Z}_{\geq 0}^L$. ${\sf Schubert}$ returns {\tt YES} if $c_{\alpha,w}>0$ and {\tt NO} otherwise. 

\begin{theorem}
\label{thm:SchubertinP}
${\sf Schubert}\in {\sf P}$.
\end{theorem}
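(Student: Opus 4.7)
The plan is to reduce {\sf Schubert} to membership in an explicit polytope and then decide the latter in polynomial time. The reduction would rest on SNP for Schubert polynomials combined with the identification ${\sf Newton}({\mathfrak S}_w) = {\mathcal S}_{D(w)}$, where ${\mathcal S}_{D(w)}$ is the Schubitope of the Rothe diagram $D(w)$; granting these (either as results established elsewhere or proved as part of this paper), $c_{\alpha,w} \neq 0$ iff the lattice point $\alpha$ lies in ${\mathcal S}_{D(w)}$. The task then becomes: given ${\sf code}(w)$ and $\alpha$, decide whether $\alpha$ satisfies the defining inequalities of the Schubitope.

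The Schubitope ${\mathcal S}_D$ for $D \subseteq [n] \times [n]$ is naively cut out by inequalities of the form $\sum_{i \in S} \alpha_i \leq \theta_D(S)$ indexed by subsets $S \subseteq [n]$, together with a total-sum equality and nonnegativity. Since there are $2^n$ subset inequalities, a direct scan is infeasible. I would pursue one of two efficient routes. The algorithmic route would prove that $S \mapsto \theta_D(S)$ is submodular, reducing membership to computing $\min_{S \subseteq [n]} \bigl(\theta_D(S) - \sum_{i \in S} \alpha_i\bigr)$ by polynomial-time submodular function minimization (Gr\"otschel--Lov\'asz--Schrijver) and checking that this minimum is nonnegative. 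The combinatorial route, hinted by the abstract's phrase \emph{tableau criterion}, would reformulate membership as the existence of a filling or labeling of $D(w)$ with content vector $\alpha$, with existence detectable by bipartite matching, max-flow, or a direct greedy construction. Ideally the combinatorial route also yields SNP for ${\mathfrak S}_w$ as a byproduct, sparing us a separate invocation.

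The main obstacle will be establishing the new structural characterization of the Schubitope that underwrites either route. Neither submodularity of $\theta_D$ nor a clean tableau-existence criterion is transparent from the naive definition; each requires genuine new insight into how row subsets $S$ interact with the columns of $D$, and this is exactly what the abstract promises to deliver. Once such a characterization is in place, the remainder is routine: standard complexity bounds for submodular minimization or matching/flow algorithms convert it into a polynomial-time algorithm whose running time is polynomial in $L$ under the unit-cost arithmetic model of Problem~\ref{prob:nonvanishing}, proving the theorem.
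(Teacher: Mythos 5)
Your second route — a tableau-existence criterion turned into a feasibility check via a transportation-type LP with integral polytope — is essentially what the paper does: it proves $\alpha\in{\mathcal S}_D$ iff a flagged, column-injective filling of $D$ with content $\alpha$ exists, encodes this as an LP with $0/1$ constraints, content constraints, and flag constraints, and shows the constraint matrix is totally unimodular so that LP feasibility and integer feasibility coincide. The submodularity route you also sketch is plausible but is not the one the paper takes.

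However, there is a genuine gap that neither of your routes addresses, and it is where most of the paper's effort in the final reduction actually goes. The input is ${\sf code}(w)=(c_1,\ldots,c_L)$, so the input size is $L$, but the Rothe diagram $D(w)$ lives in an $n\times n$ grid where $n$ can be as large as $L+\max_i c_i$, which is not bounded by any polynomial in $L$ under the unit-cost arithmetic model. Any algorithm that iterates over boxes of $D(w)$, runs a max-flow on a graph with $\Theta(n)$ nodes, or calls a submodular minimizer over subsets of $[n]$ will take time polynomial in $n$, not in $L$, and so does not prove ${\sf Schubert}\in{\sf P}$. The paper handles this by observing that $D(w)$ has at most $2L$ distinct column types (Lemmas~\ref{prop:colTypeRothe} and~\ref{prop:colPartRothe}), introducing a ``compressed'' LP ${\mathcal Q}(D,{\mathcal C},\tilde\alpha)$ in $O(L^2)$ variables that is feasible iff the full LP is (Theorem~\ref{thm:compression_equivalence}), and showing the compression data can be extracted from ${\sf code}(w)$ in $O(L^2)$ time (Propositions~\ref{prop:rothePartitionPoly} and~\ref{prop:rothePoly}). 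Your plan asserts without justification that the running time ``is polynomial in $L$''; to close the argument you would need either this compression idea or some other mechanism showing the certificate can be verified in $L^{O(1)}$ time rather than $n^{O(1)}$ time.
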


We prove Theorem~\ref{thm:SchubertinP} using another result. For $w\in S_{n}$, let ${\sf PerfectTab}(D(w),\alpha)$ 
be the fillings of $D(w)$ with $\alpha_k$ many $k$'s, where entries 
in each column are distinct, and any entry in row $i$ is $\leq i$. Let 
${\sf PerfectTab}_{<}(D(w),\alpha) \subseteq {\sf PerfectTab}(D(w),\alpha)$ 
be fillings where entries in each column increase from top to bottom.

\begin{theorem}
\label{thm:Advetableau}
$c_{\alpha,w}>0\iff {\sf PerfectTab}(D(w),\alpha)\neq \emptyset 
\iff{\sf PerfectTab}_{<}(D(w),\alpha)\neq \emptyset$
\end{theorem}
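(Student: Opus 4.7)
The plan is to split the three-way equivalence into a combinatorial part, ${\sf Tab}_{\neq}(D(w),\alpha) \neq \emptyset \iff {\sf Tab}_{<}(D(w),\alpha) \neq \emptyset$, handled by a column-sorting argument, and an algebraic--geometric part, $c_{\alpha,w}>0 \iff {\sf Tab}_{\neq}(D(w),\alpha) \neq \emptyset$, handled by passing through the Schubitope ${\mathcal S}_{D(w)}$.

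For the combinatorial part, the inclusion ${\sf Tab}_{<} \subseteq {\sf Tab}_{\neq}$ is immediate. Conversely, given $T \in {\sf Tab}_{\neq}(D(w),\alpha)$, I would sort the entries of each column of $T$ into increasing order from top to bottom to obtain $T'$. Weight and column-distinctness are preserved, so the only concern is the flag condition. Fix a column with occupied rows $r_1 < \cdots < r_m$ and original entries $v_j \leq r_j$, and let $v'_1 < \cdots < v'_m$ denote the sorted values. If $v'_k > r_k$ for some $k$, then the $m-k+1$ entries $v'_k,\ldots,v'_m$ all exceed $r_k$, whereas only the $m-k$ rows $r_{k+1},\ldots,r_m$ exceed $r_k$; so by pigeonhole one of these large entries originally sat in some row $r_j$ with $j \leq k$, violating $v_j \leq r_j$. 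Hence $v'_k \leq r_k$ for all $k$ and $T' \in {\sf Tab}_{<}$.

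For the algebraic--geometric part, the strategy is to use two ingredients: that ${\mathfrak S}_w$ is SNP with ${\sf Newton}({\mathfrak S}_w)={\mathcal S}_{D(w)}$ (extending the program of \cite{MTY}), so that $c_{\alpha,w}>0$ if and only if $\alpha$ is a lattice point of the Schubitope; and a new characterization, developed in this paper, identifying lattice points of ${\mathcal S}_{D(w)}$ with weight vectors admitting a filling in ${\sf Tab}_{\neq}(D(w),\alpha)$. The easy direction (filling $\Rightarrow$ lattice point) should amount to a column-by-column verification that the defining Schubitope inequalities are satisfied by reading off the entries of $T$. The reverse direction (lattice point $\Rightarrow$ filling) is the substantive one and is where the new Schubitope characterizations do the work.

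The main obstacle is precisely this reverse direction. Within a single column, constructing a flag-respecting injection of labels into cells is a standard Hall's-theorem problem whose defect condition is implied by one Schubitope inequality. The real difficulty is the coupling across columns: one must simultaneously split the prescribed total weight $\alpha$ into column weights each of which satisfies its own Hall condition. I would attack this either by induction on $|D(w)|$, removing a suitable cell and appealing to the inductive hypothesis after adjusting $\alpha$, or by setting up an integer max-flow / transportation network whose capacities encode both the flag constraint and the column-distinctness constraint and whose feasibility is precisely the full family of Schubitope inequalities.
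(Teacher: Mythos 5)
Your overall decomposition matches the paper's: pass from $c_{\alpha,w}>0$ to $\alpha\in\mathcal{S}_{D(w)}$ via the Fink--M\'esz\'aros--St.~Dizier theorem, then characterize Schubitope lattice points by fillings, then upgrade column-distinct to column-increasing by sorting. Your column-sorting argument is correct and is a nice direct proof of what the paper extracts as Remark~\ref{rmk:injective_iff_increasing} from the order-preserving bijection $\varphi_j$ in Theorem~\ref{thm:int_pt_characterization}. You also correctly identify the easy direction (filling $\Rightarrow$ lattice point, checked column by column) and isolate the cross-column coupling as the real obstruction.

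The gap is in your first proposed attack on the hard direction. Inducting on $\#D$ by removing a cell does not go through as stated, and this is not a technicality. Let $m$ be maximal with $\alpha_m>0$. If $D$ contains a box $(r,c)$ with $r>m$, you cannot simply delete a box and decrement $\alpha$: deleting $(r,c)$ need not keep the reduced point in the smaller Schubitope, and there is no natural entry of $\alpha$ to decrement since $\alpha_r=0$. The paper instead \emph{shifts} that box north in its column (to the maximal unoccupied position $r_1<r$), which keeps $\#D$ fixed. Consequently Theorem~\ref{thm:independent_characterization} is proved by induction on $\sum_{(i,j)\in D} i$, not on $\#D$; Case~1 strictly decreases the row-index sum without changing cardinality, while Cases~2a and~2b (box deletion, and splitting $D$ along a tight inequality into $D^{(1)}\sqcup D^{(2)}$ via the greedy tableau $\pi_{D,A}$) decrease both. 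The box-shifting step and, critically, the tight-inequality decomposition $D=D^{(1)}\sqcup D^{(2)}$ with content split $\alpha=\alpha^{(1)}+\alpha^{(2)}$ are the ideas your outline is missing. Your second proposal, a transportation-network / total-unimodularity argument, is in the spirit of the paper's Proof~1 of Theorem~\ref{thm:relaxation_equivalence}, but there it is used only to show $\mathcal{P}(D,\alpha)\neq\emptyset$ implies an integer point exists; it is not used to establish that the Schubitope inequalities are the correct Hall-type feasibility conditions. Making the max-flow route self-contained would require a separate LP-duality argument identifying the Schubitope inequalities with the min-cut bounds of the network, which would in effect reprove Theorem~\ref{thm:independent_characterization} by other means.
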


In general $\#{\sf PerfectTab}(D(w),\alpha)\neq c_{\alpha,w}$ but rather 
$\#{\sf PerfectTab}(D(w),\alpha)\geq c_{\alpha,w}$ (cf.~\cite{FGRS}). 

\begin{example}
Here are the tableaux in  $\bigcup_{\alpha} {\sf PerfectTab}_{<}(D(31524),\alpha)$:
\begin{center}
\begin{tikzpicture}[scale=.4]
\draw (0,0) rectangle (5,5);
\draw (0,4) rectangle (1,5) node[pos=.5] {$1$};
\draw (1,4) rectangle (2,5) node[pos=.5] {$1$};
\draw (1,2) rectangle (2,3) node[pos=.5] {$2$};
\draw (3,2) rectangle (4,3) node[pos=.5] {$2$};
\filldraw (2.5,4.5) circle (.5ex);
\draw[line width = .2ex] (2.5,0) -- (2.5,4.5) -- (5,4.5);
\filldraw (0.5,3.5) circle (.5ex);
\draw[line width = .2ex] (0.5,0) -- (0.5,3.5) -- (5,3.5);
\filldraw (4.5,2.5) circle (.5ex);
\draw[line width = .2ex] (4.5,0) -- (4.5,2.5) -- (5,2.5);
\filldraw (1.5,1.5) circle (.5ex);
\draw[line width = .2ex] (1.5,0) -- (1.5,1.5) -- (5,1.5);
\filldraw (3.5,0.5) circle (.5ex);
\draw[line width = .2ex] (3.5,0) -- (3.5,0.5) -- (5,0.5);

\end{tikzpicture}\quad
\begin{tikzpicture}[scale=.4]
\draw (0,0) rectangle (5,5);
\draw (0,4) rectangle (1,5) node[pos=.5] {$1$};
\draw (1,4) rectangle (2,5) node[pos=.5] {$1$};
\draw (1,2) rectangle (2,3) node[pos=.5] {$2$};
\draw (3,2) rectangle (4,3) node[pos=.5] {$1$};
\filldraw (2.5,4.5) circle (.5ex);
\draw[line width = .2ex] (2.5,0) -- (2.5,4.5) -- (5,4.5);
\filldraw (0.5,3.5) circle (.5ex);
\draw[line width = .2ex] (0.5,0) -- (0.5,3.5) -- (5,3.5);
\filldraw (4.5,2.5) circle (.5ex);
\draw[line width = .2ex] (4.5,0) -- (4.5,2.5) -- (5,2.5);
\filldraw (1.5,1.5) circle (.5ex);
\draw[line width = .2ex] (1.5,0) -- (1.5,1.5) -- (5,1.5);
\filldraw (3.5,0.5) circle (.5ex);
\draw[line width = .2ex] (3.5,0) -- (3.5,0.5) -- (5,0.5);
\end{tikzpicture}\quad
\begin{tikzpicture}[scale=.4]
\draw (0,0) rectangle (5,5);

\draw (0,4) rectangle (1,5) node[pos=.5] {$1$};
\draw (1,4) rectangle (2,5) node[pos=.5] {$1$};

\draw (1,2) rectangle (2,3) node[pos=.5] {$3$};

\draw (3,2) rectangle (4,3) node[pos=.5] {$1$};
\filldraw (2.5,4.5) circle (.5ex);
\draw[line width = .2ex] (2.5,0) -- (2.5,4.5) -- (5,4.5);
\filldraw (0.5,3.5) circle (.5ex);
\draw[line width = .2ex] (0.5,0) -- (0.5,3.5) -- (5,3.5);
\filldraw (4.5,2.5) circle (.5ex);
\draw[line width = .2ex] (4.5,0) -- (4.5,2.5) -- (5,2.5);
\filldraw (1.5,1.5) circle (.5ex);
\draw[line width = .2ex] (1.5,0) -- (1.5,1.5) -- (5,1.5);
\filldraw (3.5,0.5) circle (.5ex);
\draw[line width = .2ex] (3.5,0) -- (3.5,0.5) -- (5,0.5);
\end{tikzpicture}
\quad
%\vspace*{0.5cm}
\begin{tikzpicture}[scale=.4]

\draw (0,0) rectangle (5,5);
\draw (0,4) rectangle (1,5) node[pos=.5] {$1$};
\draw (1,4) rectangle (2,5) node[pos=.5] {$1$};
\draw (1,2) rectangle (2,3) node[pos=.5] {$2$};
\draw (3,2) rectangle (4,3) node[pos=.5] {$3$};
\filldraw (2.5,4.5) circle (.5ex);
\draw[line width = .2ex] (2.5,0) -- (2.5,4.5) -- (5,4.5);
\filldraw (0.5,3.5) circle (.5ex);
\draw[line width = .2ex] (0.5,0) -- (0.5,3.5) -- (5,3.5);
\filldraw (4.5,2.5) circle (.5ex);
\draw[line width = .2ex] (4.5,0) -- (4.5,2.5) -- (5,2.5);
\filldraw (1.5,1.5) circle (.5ex);
\draw[line width = .2ex] (1.5,0) -- (1.5,1.5) -- (5,1.5);
\filldraw (3.5,0.5) circle (.5ex);
\draw[line width = .2ex] (3.5,0) -- (3.5,0.5) -- (5,0.5);
\end{tikzpicture}\quad
\begin{tikzpicture}[scale=.4]

\draw (0,0) rectangle (5,5);

\draw (0,4) rectangle (1,5) node[pos=.5] {$1$};
\draw (1,4) rectangle (2,5) node[pos=.5] {$1$};
\draw (1,2) rectangle (2,3) node[pos=.5] {$3$};
\draw (3,2) rectangle (4,3) node[pos=.5] {$2$};

\filldraw (2.5,4.5) circle (.5ex);
\draw[line width = .2ex] (2.5,0) -- (2.5,4.5) -- (5,4.5);
\filldraw (0.5,3.5) circle (.5ex);
\draw[line width = .2ex] (0.5,0) -- (0.5,3.5) -- (5,3.5);
\filldraw (4.5,2.5) circle (.5ex);
\draw[line width = .2ex] (4.5,0) -- (4.5,2.5) -- (5,2.5);
\filldraw (1.5,1.5) circle (.5ex);
\draw[line width = .2ex] (1.5,0) -- (1.5,1.5) -- (5,1.5);
\filldraw (3.5,0.5) circle (.5ex);
\draw[line width = .2ex] (3.5,0) -- (3.5,0.5) -- (5,0.5);
\end{tikzpicture}\quad
\begin{tikzpicture}[scale=.4]

\draw (0,0) rectangle (5,5);

\draw (0,4) rectangle (1,5) node[pos=.5] {$1$};
\draw (1,4) rectangle (2,5) node[pos=.5] {$1$};

\draw (1,2) rectangle (2,3) node[pos=.5] {$3$};

\draw (3,2) rectangle (4,3) node[pos=.5] {$3$};

\filldraw (2.5,4.5) circle (.5ex);
\draw[line width = .2ex] (2.5,0) -- (2.5,4.5) -- (5,4.5);
\filldraw (0.5,3.5) circle (.5ex);
\draw[line width = .2ex] (0.5,0) -- (0.5,3.5) -- (5,3.5);
\filldraw (4.5,2.5) circle (.5ex);
\draw[line width = .2ex] (4.5,0) -- (4.5,2.5) -- (5,2.5);
\filldraw (1.5,1.5) circle (.5ex);
\draw[line width = .2ex] (1.5,0) -- (1.5,1.5) -- (5,1.5);
\filldraw (3.5,0.5) circle (.5ex);
\draw[line width = .2ex] (3.5,0) -- (3.5,0.5) -- (5,0.5);
\end{tikzpicture}
\end{center}

Hence, for instance, $c_{(2,1,1),31524}>0$ but $c_{(4),31524}=0$.
\end{example}

To prove Theorems~\ref{thm:SchubertinP} and~\ref{thm:Advetableau} we establish more general results about the \emph{Schubitope} introduced in \cite{MTY}. This polytope ${\mathcal S}_D$ generalizes the $\lambda$-permutahedron of Example~\ref{exa:inP}. 
It is defined with a halfspace description for any
diagram of boxes $D\subseteq [n]^2$. 

  In the case of Rothe diagrams $D:=D(w)$, it was conjectured in \cite{MTY} that ${\mathcal S}_{D(w)}$
is the Newton polytope of ${\mathfrak S}_w$ and moreover that ${\mathfrak S}_w$ has the SNP property.
These conjectures were proved by A.~Fink-K.~M\'esz\'aros-A.~St.~Dizier \cite{Fink}.  
This, combined with Theorem \ref{thm:independent_characterization} and properties of perfect tableaux, proves Theorem~\ref{thm:Advetableau}.

\emph{Key polynomials} $\kappa_\beta$ are a specialization of the non-symmetric Macdonald polynomials.
Similarly to the above case, for skyline diagrams  $D:=D_\beta$, \cite{MTY} conjectured that ${\mathcal S}_{D_\beta}$
is the Newton polytope of $\kappa_\beta$ and moreover that $\kappa_\beta$ are SNP; this is proved in \cite{Fink}.  Nonvanishing is also in ${\sf P}$, provable using results of 
Section~\ref{subsctn:polytopes} in a manner analogous to that used for the Schubert polynomials.

\section{The Schubitope}\label{sec:SchubChar}

	Fix $n \in \mathbb{Z}_{>0}$ and let $D \subseteq [n]^2$. We call $D$ a \emph{diagram} and visualize $D$ as a subset of an $n \times n$ grid of boxes, oriented so that $(r,c) \in [n]^2$ represents the box in the $r$th row from the top and the $c$th column from the left. Given $S \subseteq [n]$ and a column $c \in [n]$, construct a string denoted ${\sf word}_{c,S}(D)$ by reading column $c$ from top to bottom and recording
	\begin{itemize}
		\item $($ if $(r,c) \not\in D$ and $r \in S$,
		\item $)$ if $(r,c) \in D$ and $r \not\in S$, and
		\item $\star$ if $(r,c) \in D$ and $r \in S$.
	\end{itemize}

	Let $\theta_D^c(S) = \#\{\star\text{'s in } {\sf word}_{c,S}(D)\} + \#\{\text{paired } ()\text{'s in } {\sf word}_{c,S}(D)\}$ and
	\begin{align*}
	\theta_D(S) = \sum_{c=1}^{n} \theta_D^c(S).
	\end{align*}
	
\begin{example}\label{ex:schubWord} In the diagram $D$ below, we labelled the corresponding strings for ${\sf word}_{c,S}(D)$ for $S=\{1,3\}$. For instance, we see ${\sf word}_{5,\{1,3\}}(D)=(\star)$.
\begin{center}
\begin{tikzpicture}

\node (root) {\begin{tikzpicture}[scale=.4]
\draw (0,0) rectangle (5,5);

\draw (0,4) rectangle (1,5);

\draw (1,3) rectangle (2,4);

\draw (1,2) rectangle (2,3);
\draw (4,2) rectangle (5,3);

\draw (0,1) rectangle (1,2);
\draw (1,1) rectangle (2,2);
\draw (2,1) rectangle (3,2);
\draw (4,1) rectangle (5,2);

\draw (1,0) rectangle (2,1);
\end{tikzpicture}};

\node[right=1 of root] (temp) {\begin{tikzpicture}[scale=.4]
\draw (0,0) rectangle (5,5);

\draw (0,4) rectangle (1,5);

\draw (1,3) rectangle (2,4);

\draw (1,2) rectangle (2,3);
\draw (4,2) rectangle (5,3);

\draw (0,1) rectangle (1,2);
\draw (1,1) rectangle (2,2);
\draw (2,1) rectangle (3,2);
\draw (4,1) rectangle (5,2);

\draw (0,0) rectangle (1,1);
\draw (1,0) rectangle (2,1);

\node at (0,4.5) {\tiny$\star$};
\node at (1,4.5) {\tiny$($};
\node at (2,4.5) {\tiny$($};
\node at (3,4.5) {\tiny$($};
\node at (4,4.5) {\tiny$($};

\node at (0,2.5) {\tiny$($};
\node at (1,2.5) {\tiny$\star$};
\node at (2,2.5) {\tiny$($};
\node at (3,2.5) {\tiny$($};
\node at (4,2.5) {\tiny$\star$};

\node at (1,3.5) {\tiny$)$};

\node at (0,1.5) {\tiny$)$};
\node at (1,1.5) {\tiny$)$};
\node at (2,1.5) {\tiny$)$};
\node at (4,1.5) {\tiny$)$};

\node at (0,0.5) {\tiny$)$};
\node at (1,0.5) {\tiny$)$};

\end{tikzpicture}};

\draw [->,
line join=round,
decorate, decoration={
    zigzag,
    segment length=9,
    amplitude=2,post=lineto,
    post length=2pt
}] (root) -- (temp);
\path (temp);

\end{tikzpicture}
\end{center}
\end{example}

	The \emph{Schubitope} $\mathcal{S}_D$, as defined in \cite{MTY}, is the polytope
	\begin{align}\label{eqn:ogschubitope}
	\left\{(\alpha_1,\dots,\alpha_n) \in \mathbb{R}_{\geq 0}^n : \alpha_1 + \dots + \alpha_n = \#D \text{ and } \sum_{i \in S} \alpha_i \leq \theta_D(S) \text{ for all } S \subseteq [n]\right\}.
	\end{align}
	
	\subsection{Characterizations via tableaux}
	
	A \emph{tableau} of \emph{shape} $D$ is a map 
\[\tau : D \rightarrow [n] \cup \{\circ\},\] 
where $\tau(r,c) = \circ$ indicates that the box $(r,c)$ is unlabelled. Let ${\sf Tab}(D)$ denote the set of such tableaux.
		One of the ideas in our proofs is to reformulate the original definition of $\theta_D(S)$ into the language of tableaux. Given $S \subseteq [n]$, define $\pi_{D,S} \in {\sf Tab}(D)$ by
	\begin{align}\label{eqn:pi_definition}
	\pi_{D,S}(r,c) :=
	\begin{cases}
	r &\text{if } (r,c) \text{ contributes a ``$\star$''  to } {\sf word}_{c,S}(D), \\
	s &\mbox{if } (r,c) \text{ contributes a  ``)'' to } {\sf word}_{c,S}(D) \mbox{ which is} \\
	&\text{paired with an ``('' from } (s,c), \\
	\circ &\mbox{otherwise}.
	\end{cases}
	\end{align}
	In (\ref{eqn:pi_definition}) we pair by the standard ``inside-out'' convention.
\begin{example} \label{ex:schubTab} Continuing Example \ref{ex:schubWord}, below is $\pi_{D,\{1,3\}}(D)$.
\begin{center}
    \begin{tikzpicture}[scale=.4]
\draw (0,0) rectangle (5,5);

\draw (0,4) rectangle (1,5)  node[pos=.5] {$1$};

\draw (1,3) rectangle (2,4)  node[pos=.5] {$1$};

\draw (1,2) rectangle (2,3)  node[pos=.5] {$3$};
\draw (4,2) rectangle (5,3)  node[pos=.5] {$3$};

\draw (0,1) rectangle (1,2)  node[pos=.5] {$3$};
\draw (1,1) rectangle (2,2)  node[pos=.5] {$\circ$};
\draw (2,1) rectangle (3,2)  node[pos=.5] {$3$};
\draw (4,1) rectangle (5,2)  node[pos=.5] {$1$};

\draw (1,0) rectangle (2,1)  node[pos=.5] {$\circ$};

\draw (0,0) rectangle (1,1)  node[pos=.5] {$\circ$};

\end{tikzpicture}
\end{center}
\end{example}

	\begin{theorem}\label{thm:dependent_characterization}
		Let $D \subseteq [n]^2$ and $\alpha = (\alpha_1,\dots,\alpha_n) \in \mathbb{Z}_{\geq 0}^n$ with $\alpha_1 + \dots + \alpha_n = \#D$. Then $\alpha \in \mathcal{S}_D$ if and only if for each $S \subseteq [n]$,  $\sum_{i \in S} \alpha_i \leq \#\pi_{D,S}^{-1}(S)$.
	\end{theorem}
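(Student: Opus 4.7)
The plan is to deduce both directions directly from the machinery already assembled: Propositions~\ref{prop:theta_equivalence}, \ref{prop:pi_in_FCITab}, and~\ref{prop:pi_optimality}. The key observation is that $\pi_{D,S}$ simultaneously lies in ${\sf FCITab}(D)$ and achieves $\#\pi_{D,S}^{-1}(S) = \theta_D(S)$, so it serves as the ``universal'' witness tableau.

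For the forward direction, assume $\alpha \in \mathcal{S}_D$. Fix $S \subseteq [n]$ and take $\tau_{D,S} := \pi_{D,S}$. By Proposition~\ref{prop:pi_in_FCITab}, $\pi_{D,S} \in {\sf FCITab}(D)$, and by Proposition~\ref{prop:theta_equivalence}, $\#\pi_{D,S}^{-1}(S) = \theta_D(S)$. The Schubitope inequality $\sum_{i \in S} \alpha_i \leq \theta_D(S)$ from \eqref{eqn:ogschubitope} then says exactly that $\pi_{D,S}$ exhausts $\alpha$ over $S$.

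For the reverse direction, suppose for every $S \subseteq [n]$ there exists $\tau_{D,S} \in {\sf FCITab}(D)$ exhausting $\alpha$ over $S$. Fix such an $S$ and chain the inequalities
\begin{align*}
\sum_{i \in S} \alpha_i \leq \#\tau_{D,S}^{-1}(S) \leq \#\pi_{D,S}^{-1}(S) = \theta_D(S),
\end{align*}
where the first inequality is the exhaustion hypothesis, the second is the optimality of $\pi_{D,S}$ among ${\sf FCITab}(D)$ established in Proposition~\ref{prop:pi_optimality}, and the equality is Proposition~\ref{prop:theta_equivalence}. Since $\alpha \in \mathbb{Z}_{\geq 0}^n$ and $\sum_i \alpha_i = \#D$ by assumption, all defining inequalities of $\mathcal{S}_D$ hold, so $\alpha \in \mathcal{S}_D$.

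There is no real obstacle here: both directions are short, and the content has already been front-loaded into Proposition~\ref{prop:pi_optimality}, which is what identifies $\pi_{D,S}$ as the maximizer of $\#\tau^{-1}(S)$ over ${\sf FCITab}(D)$. If any step required more care, it would be the reverse direction, where one must notice that the optimality result reduces the existence of \emph{some} exhausting tableau to the existence of the canonical one $\pi_{D,S}$, but this is immediate once Proposition~\ref{prop:pi_optimality} is in hand.
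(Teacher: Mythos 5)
Your proof is correct and follows essentially the same route as the paper: in both directions you take $\pi_{D,S}$ as the canonical witness, using Proposition~\ref{prop:theta_equivalence} to equate $\theta_D(S)$ with $\#\pi_{D,S}^{-1}(S)$ and Proposition~\ref{prop:pi_optimality} to bound any other $\tau_{D,S}$ by $\pi_{D,S}$. The only cosmetic difference is that you explicitly cite Proposition~\ref{prop:pi_in_FCITab} to justify $\pi_{D,S} \in {\sf FCITab}(D)$, which the paper uses implicitly.
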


	Define $\tau \in {\sf Tab}(D)$ to be \emph{flagged} if $\tau(r,c) \leq r$ whenever $\tau(r,c) \neq \circ$. It is \emph{column-injective} if $\tau(r,c) \neq \tau(r',c)$ whenever $r \neq r'$ and $\tau(r,c) \neq \circ$. 
%Let ${\sf FCITab}(D) \subseteq {\sf Tab}(D)$ be the set of tableaux of shape $D$ which are flagged and column-injective. 

\begin{example}\label{ex:FCIT}
Of the tableaux of shape $D$ below, only the second and fourth are flagged, and only the third and fourth are column-injective.
\begin{center}
\begin{tikzpicture}[scale=.4]

\draw (0,0) rectangle (5,5);
\draw (0,4) rectangle (1,5) node[pos=.5] {$1$};
\draw (1,4) rectangle (2,5) node[pos=.5] {$1$};
\draw (4,3) rectangle (5,4) node[pos=.5] {$2$};
\draw (1,2) rectangle (2,3) node[pos=.5] {$5$};
\draw (3,2) rectangle (4,3) node[pos=.5] {$4$};
\draw (4,2) rectangle (5,3) node[pos=.5] {$\circ$};

\draw (1,1) rectangle (2,2) node[pos=.5] {$2$};

%\draw (3,1) rectangle (4,2) node[pos=.5] {$2$};
\draw (3,0) rectangle (4,1) node[pos=.5] {$4$};
\end{tikzpicture}\qquad
\begin{tikzpicture}[scale=.4]

\draw (0,0) rectangle (5,5);
\draw (0,4) rectangle (1,5) node[pos=.5] {$1$};
\draw (1,4) rectangle (2,5) node[pos=.5] {$1$};
\draw (4,3) rectangle (5,4) node[pos=.5] {$2$};
\draw (1,2) rectangle (2,3) node[pos=.5] {$3$};
\draw (3,2) rectangle (4,3) node[pos=.5] {$2$};
\draw (4,2) rectangle (5,3) node[pos=.5] {$\circ$};
\draw (1,1) rectangle (2,2) node[pos=.5] {$2$};

%\draw (3,1) rectangle (4,2) node[pos=.5] {$2$};
\draw (3,0) rectangle (4,1) node[pos=.5] {$2$};

\end{tikzpicture}\qquad
\begin{tikzpicture}[scale=.4]

\draw (0,0) rectangle (5,5);

\draw (0,4) rectangle (1,5) node[pos=.5] {$1$};
\draw (1,4) rectangle (2,5) node[pos=.5] {$1$};

\draw (4,3) rectangle (5,4) node[pos=.5] {$2$};
\draw (1,2) rectangle (2,3) node[pos=.5] {$5$};
\draw (3,2) rectangle (4,3) node[pos=.5] {$4$};
\draw (4,2) rectangle (5,3) node[pos=.5] {$\circ$};
\draw (1,1) rectangle (2,2) node[pos=.5] {$\circ$};

%\draw (3,1) rectangle (4,2) node[pos=.5] {$2$};
\draw (3,0) rectangle (4,1) node[pos=.5] {$3$};

\end{tikzpicture}
\qquad
\begin{tikzpicture}[scale=.4]

\draw (0,0) rectangle (5,5);

\draw (0,4) rectangle (1,5) node[pos=.5] {$1$};
\draw (1,4) rectangle (2,5) node[pos=.5] {$1$};

\draw (4,3) rectangle (5,4) node[pos=.5] {$\circ$};

\draw (1,2) rectangle (2,3) node[pos=.5] {$3$};
\draw (3,2) rectangle (4,3) node[pos=.5] {$3$};
\draw (4,2) rectangle (5,3) node[pos=.5] {$\circ$};

\draw (1,1) rectangle (2,2) node[pos=.5] {$2$};

\draw (3,0) rectangle (4,1) node[pos=.5] {$4$};
\end{tikzpicture}
\end{center}
%\qed
\end{example}

	Further, call a tableau $\tau \in {\sf Tab}(D)$ \emph{perfect} if $\tau$ is flagged, column-injective, and if no boxes are left unlabelled, i.e., $\tau^{-1}(\{\circ\}) = \emptyset$. Say $\tau \in {\sf Tab}(D)$ has \emph{content} $\alpha$ if $\#\tau^{-1}(\{i\}) = \alpha_i$ for each $i \in [n]$. Let ${\sf PerfectTab}(D,\alpha)$ denote the set of perfect tableaux of content $\alpha$.

We use Theorem~\ref{thm:dependent_characterization} to prove:

	\begin{theorem}\label{thm:independent_characterization}
		Let $D \subseteq [n]^2$ and $\alpha = (\alpha_1,\dots,\alpha_n) \in \mathbb{Z}_{\geq 0}^n$. Then $\alpha \in \mathcal{S}_D$ if and only if ${\sf PerfectTab}(D,\alpha) \neq \emptyset$.
	\end{theorem}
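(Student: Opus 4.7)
For $(\Leftarrow)$, I would argue directly: given $\tau \in {\sf PerfectTab}(D,\alpha)$, perfection gives $\sum_i \alpha_i = \#D$, and for every $S \subseteq [n]$,
\[
\sum_{i \in S} \alpha_i = \#\tau^{-1}(S) \le \#\pi_{D,S}^{-1}(S) = \theta_D(S)
\]
by Propositions~\ref{prop:pi_optimality} and~\ref{prop:theta_equivalence}, so $\alpha \in \mathcal{S}_D$.

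For $(\Rightarrow)$, by Proposition~\ref{prop:proper_content} it is enough to produce $\tau \in {\sf FCITab}(D,\alpha)$. I would encode this as an integer max-flow problem on a capacitated network with source $s$ and sink $t$, label nodes $A_i$ with arcs $s \to A_i$ of capacity $\alpha_i$, label-column nodes $B_{i,c}$ with arcs $A_i \to B_{i,c}$ of capacity $1$ (enforcing column-injectivity), cell nodes $C_{r,c}$ for $(r,c) \in D$ with arcs $B_{i,c} \to C_{r,c}$ of capacity $1$ when $i \le r$ (enforcing the flag condition), and arcs $C_{r,c} \to t$ of capacity $1$. An integer $(s,t)$-flow of value $\#D$ decodes directly into the desired $\tau$ (each cell gets the label from its unique incoming $B_{i,c}$); by integrality of max-flow in integer-capacity networks and max-flow/min-cut, it suffices to show every $(s,t)$-cut has capacity $\ge \#D$. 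Parameterizing a cut by $S = \{i : A_i \text{ on the $s$-side}\}$ and $V_c = \{r : C_{r,c} \text{ on the $t$-side}\} \subseteq D_c$ (with $D_c = \{r : (r,c) \in D\}$), a routine column-by-column minimization over the $B_{i,c}$ placements and over $V_c$ compresses the cut capacity to
\[
\sum_{i \notin S}\alpha_i + \sum_{c=1}^n \left(|D_c| - \max_{M \in D_c \cup \{0\}}\bigl(|D_c \cap [M]| - |S \cap [M]|\bigr)\right).
\]

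The crux is then the combinatorial identity
\[
\theta_D^c(S) = |D_c| - \max_{M \in D_c \cup \{0\}}\bigl(|D_c \cap [M]| - |S \cap [M]|\bigr),
\]
which I would prove by identifying $|D_c| - \theta_D^c(S)$ with the number of unpaired $)$'s in ${\sf word}_{c,S}(D)$, and the latter with the maximum prefix excess of $)$'s over $($'s. Substituting this into the cut capacity formula collapses it to $\sum_{i \notin S}\alpha_i + \theta_D(S)$, which is $\ge \#D = \sum_i \alpha_i$ precisely when $\sum_{i \in S}\alpha_i \le \theta_D(S)$---a Schubitope inequality, holding by assumption. The main obstacle will be establishing the parenthesis-pairing identity, which requires careful case analysis on whether each row lies in $S$ and/or $D_c$; the cut optimization itself is lengthy but routine bookkeeping. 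A slicker alternative would be to invoke the forthcoming Theorem~\ref{thm:relaxation_equivalence} (integrality of ${\mathcal P}(D,\alpha)$) together with an LP-duality identification of $\mathcal{S}_D$ with a projection of ${\mathcal P}(D,\alpha)$---the same network-flow structure repackaged as a linear program.
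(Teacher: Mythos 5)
Your proof is correct but takes a genuinely different route from the paper's. The paper proves the $(\Rightarrow)$ direction by a hands-on induction on $\sum_{(i,j)\in D} i$, branching on whether $D$ has boxes below the last nonzero row of $\alpha$ (in which case a box is slid north) and, if not, on whether some nontrivial Schubitope inequality is tight (in which case $D$ and $\alpha$ are split along the tight set $A$ via Lemma~\ref{lemma:pi_union}) or all are strict (in which case a box in the bottom row is deleted). That argument leans heavily on the greedy-tableau machinery of Propositions~\ref{prop:pi_in_FCITab}--\ref{prop:pi_characterization} and Lemma~\ref{lemma:pi_union}. Your argument instead packages the search for a perfect tableau as an integral max-flow problem and reduces everything to max-flow/min-cut plus the identity $\theta_D^c(S)=|D_c|-\max_{M\in D_c\cup\{0\}}(|D_c\cap[M]|-|S\cap[M]|)$, which I can confirm: $|D_c|-\theta_D^c(S)$ counts unpaired $)$'s, and the prefix excess of $)$'s over $($'s up to row $M$ simplifies to $|D_c\cap[M]|-|S\cap[M]|$ (the restriction of $M$ to $D_c\cup\{0\}$ loses nothing since the quantity is nonincreasing across rows not in $D_c$). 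The column-by-column cut minimization also checks out: for $i\notin S$ put $B_{i,c}$ on the $t$-side at zero cost, and for $i\in S$ the cost is $\min(1,\#\{r\in V_c: r\ge i\})$, which after optimizing $V_c$ for fixed $M=\max V_c$ gives exactly your formula. What your route buys is a conceptually uniform argument that simultaneously yields the integrality statement the paper later establishes as Theorem~\ref{thm:relaxation_equivalence} (your flow LP is a repackaging of $\mathcal{P}(D,\alpha)$, with integrality coming from total unimodularity of the network matrix rather than of $M$ as in Proof~1 of that theorem); what it gives up is the elementary self-contained character of the paper's induction, which deliberately avoids LP machinery until Section~\ref{subsctn:polytopes}, and it relocates the combinatorial core from the $\pi_{D,S}$ decomposition lemmas to the parenthesis-excess identity. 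Both are complete proofs; yours is closer in spirit to the paper's ``Proof 1'' of Theorem~\ref{thm:relaxation_equivalence} than to its proof of the theorem you were actually asked about.
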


	\section{Polytopal descriptions of perfect tableaux}\label{subsctn:polytopes}
	By Theorem \ref{thm:independent_characterization}, to decide $\alpha \in \mathcal{S}_D$, it suffices to determine ${\sf PerfectTab}(D,\alpha) \neq \emptyset$. Thus it remains to analyze the complexity of deciding ${\sf PerfectTab}(D,\alpha)\neq \emptyset$. 
	
	For this, we construct a polytope that characterizes ${\sf PerfectTab}(D,\alpha)$.
	Given $D \subseteq [n]^2$ and $\alpha = (\alpha_1,\dots,\alpha_n) \in \mathbb{Z}_{\geq 0}^n$, define 
	\[\mathcal{P}(D,\alpha) \subseteq \mathbb{R}^{n^2}\] 
	to be the polytope with points of the form
	$
	(\alpha_{ij})_{i,j\in[n]} = (\alpha_{11},\dots,\alpha_{n1},\dots,\alpha_{1n},\dots,\alpha_{nn})
	$
	governed by the inequalities (A)-(C) below.
	\begin{enumerate}
		\item[(A)] Column-Injectivity Conditions: For all $i,j \in [n]$,
		\begin{align*}
		0 \leq \alpha_{ij} \leq 1.
		\end{align*}
		
		\item[(B)] Content Conditions: For all $i \in [n]$,
		\begin{align*}
		\sum_{j=1}^n \alpha_{ij} = \alpha_i.
		\end{align*}
		
		\item[(C)] Flag Conditions: For all $s,j \in [n]$,
		\begin{align*}
		\sum_{i=1}^s \alpha_{ij} \geq \#\{(i,j) \in D : i \leq s\}.
		\end{align*}
	\end{enumerate}

	\begin{theorem}\label{thm:int_pt_characterization}
		Let $D \subseteq [n]^2$ and $\alpha = (\alpha_1,\dots,\alpha_n) \in \mathbb{Z}_{\geq 0}^n$. Then ${\sf PerfectTab}(D,\alpha) \neq \emptyset$ if and only if $\alpha_1 + \dots + \alpha_n = \#D$ and $\mathcal{P}(D,\alpha) \cap \mathbb{Z}^{n^2} \neq \emptyset$.
	\end{theorem}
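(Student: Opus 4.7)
The plan is to prove Theorem~\ref{thm:int_pt_characterization} by constructing an explicit bijection between ${\sf PerfectTab}(D,\alpha)$ and $\mathcal{P}(D,\alpha)\cap \mathbb{Z}^{n^2}$ (when $\alpha_1+\dots+\alpha_n = \#D$), essentially encoding a tableau by the indicator of its column-contents.

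For the forward direction ($\Rightarrow$), assume $\tau \in {\sf PerfectTab}(D,\alpha)$. First, Proposition~\ref{prop:proper_content} gives $\alpha_1+\dots+\alpha_n = \#D$. Then define $\alpha_{ij} = 1$ if $i \in \tau(\{(r,j) : (r,j) \in D\})$ and $\alpha_{ij} = 0$ otherwise. Verifying the three families of inequalities is routine: (A) is immediate; (B) follows from column-injectivity, since $\sum_j \alpha_{ij}$ counts columns in which $i$ appears, which equals $\#\tau^{-1}(\{i\}) = \alpha_i$; (C) uses that $\tau$ is flagged and column-injective, so each box $(i,j) \in D$ with $i \leq s$ contributes a distinct value in $\{1,\dots,s\}$ to column $j$, giving $\sum_{i=1}^s \alpha_{ij} \geq \#\{(i,j) \in D : i \leq s\}$.

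For the reverse direction ($\Leftarrow$), suppose $\alpha_1+\dots+\alpha_n = \#D$ and $(\alpha_{ij}) \in \mathcal{P}(D,\alpha) \cap \mathbb{Z}^{n^2}$. For each column $j$, let $I_j = \{i \in [n] : \alpha_{ij} = 1\}$ and $R_j = \{r : (r,j) \in D\}$. Proposition~\ref{prop:column_content} gives $|I_j| = |R_j|$. The main step is to produce a bijection $\phi_j : R_j \to I_j$ with $\phi_j(r) \leq r$ for each $r \in R_j$, and then set $\tau(r,j) = \phi_j(r)$ for $(r,j) \in D$. I will do this via a greedy matching: list $R_j = \{r_1 < r_2 < \cdots < r_k\}$ and $I_j = \{i_1 < i_2 < \cdots < i_k\}$ in increasing order, and set $\phi_j(r_\ell) = i_\ell$. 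The flag condition (C) at $s = r_\ell$ reads $|\{i \in I_j : i \leq r_\ell\}| \geq |\{r \in R_j : r \leq r_\ell\}| = \ell$, which forces $i_\ell \leq r_\ell$, so $\phi_j$ is flagged. This is the main obstacle — everything else is verification — and it is ultimately the analogue of the Hall-type feasibility consequence of the flag conditions.

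It remains to check $\tau \in {\sf PerfectTab}(D,\alpha)$: flaggedness and column-injectivity follow immediately from the properties of each $\phi_j$; no box is unlabelled by construction; and the content is $\alpha$ because $\#\tau^{-1}(\{i\}) = \#\{j : i \in I_j\} = \sum_j \alpha_{ij} = \alpha_i$ by (B). Finally, the two constructions are mutually inverse on the nose, which yields the bijection and in particular the nonemptiness equivalence.
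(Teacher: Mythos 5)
Your proof is correct and follows essentially the same approach as the paper: encode a perfect tableau by the $\{0,1\}$-indicator of its column contents, and conversely recover a tableau from an integer point of $\mathcal{P}(D,\alpha)$ via the unique order-preserving bijection $\phi_j : R_j \to I_j$, using the flag conditions (C) to deduce $\phi_j(r) \leq r$. The paper's argument in Claim~\ref{claim:increasing_tableau} is the same order-preserving matching (called $\varphi_j$ there), with the same use of Proposition~\ref{prop:column_content} for equal cardinalities.
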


	Theorem \ref{thm:int_pt_characterization} formulates the problem of determining if
	${\sf PerfectTab}(D,\alpha)\neq \emptyset$ in terms of feasibility of an integer linear programming problem. 
	In general, integral feasibility is ${\sf NP}$-{\sf complete}. However, ${\mathcal P}(D,\alpha)$ is totally unimodular. Thus
	feasibility of the problem is equivalent to feasibility of its LP-relaxation:
	
	\begin{theorem}\label{thm:relaxation_equivalence}
		Let $D \subseteq [n]^2$ and $\alpha = (\alpha_1,\dots,\alpha_n) \in \mathbb{Z}^n$ with $\alpha_1 + \dots + \alpha_n = \#D$. Then $\mathcal{P}(D,\alpha) \cap \mathbb{Z}^{n^2} \neq \emptyset$ if and only if $\mathcal{P}(D,\alpha) \neq \emptyset$.
	\end{theorem}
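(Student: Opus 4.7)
The nontrivial direction is $(\Leftarrow)$: if $\mathcal{P}(D,\alpha) \neq \emptyset$, produce an integer point. My plan is to show that the constraint matrix defining $\mathcal{P}(D,\alpha)$ is totally unimodular (TU). Every right-hand side in (A)--(C) is an integer (the $\alpha_i$, the flag counts $\#\{(i,j)\in D : i\leq s\}$, and $0,1$), so TU forces every vertex of $\mathcal{P}(D,\alpha)$ to be integral; hence any nonempty $\mathcal{P}(D,\alpha)$ automatically contains a lattice point.

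I would establish TU via the Ghouila--Houri criterion: for every subset $R$ of rows of the constraint matrix, exhibit a sign function $\epsilon:R\to\{-1,+1\}$ such that for each variable column the signed sum of entries lies in $\{-1,0,+1\}$. The box constraints (A) contribute unit-vector rows, which preserve TU automatically, so it suffices to handle the (B) row-sum rows and (C) column-prefix rows. For a given $R$, my assignment is: for each fixed column index $j_0$, list the (C)-rows of $R$ with that $j_0$ as $s_1<s_2<\cdots<s_k$ and set $\epsilon(s_m,j_0)=(-1)^{k-m}$, so signs alternate top-down starting with $+1$ at $s_k$; additionally, every (B)-row in $R$ gets sign $-1$.

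The verification is a direct computation. Fix a variable $\alpha_{i_0,j_0}$. A (C)-row $(s,j_0)$ contains $\alpha_{i_0,j_0}$ iff $s\geq i_0$, so its (C)-contribution to the column sum is the suffix sum $\sum_{m\geq t}(-1)^{k-m}$, where $t$ is the smallest index with $s_t\geq i_0$. This telescopes to $0$ or $+1$ depending on the parity of $k-t$, and in particular is never $-1$. The (B)-contribution is $-1$ if $i_0$'s row-sum row lies in $R$ and $0$ otherwise. Therefore the total signed sum lies in $\{-1,0,+1\}$, verifying Ghouila--Houri.

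The main subtlety is picking the alternation direction correctly. Alternating from the largest $s$ downward keeps every (C)-contribution in $\{0,+1\}$, which pairs cleanly with the uniform $-1$ choice on (B)-rows. Alternating the other way would allow (C)-contributions of both signs in different columns, forcing incompatible constraints on $\epsilon$ for the (B)-rows. Once TU is established, the theorem follows from the standard fact that a polytope defined by a TU matrix with integer right-hand side has only integer vertices, so nonemptiness forces $\mathcal{P}(D,\alpha)\cap\mathbb{Z}^{n^2}\neq\emptyset$.
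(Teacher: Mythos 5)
Your proof is correct and takes the same high-level route as the paper's first proof: set up the constraint matrix for $\mathcal{P}(D,\alpha)$, prove it is totally unimodular, then invoke integrality of vertices. The difference is in how total unimodularity is verified. The paper checks every square submatrix's determinant by induction on the number of rows drawn from the box constraints (A), reducing the base case to a known fact about $(0,\pm1)$-matrices with at most one $+1$ and at most one $-1$ per column (after row-reducing the lower-triangular block from (C)). You instead apply the Ghouila--Houri criterion directly, and your signing works: the alternating signs along each column's (C)-prefix rows, starting with $+1$ at the largest row index, make every (C)-contribution land in $\{0,+1\}$, which is exactly what is needed to absorb the uniform $-1$ contribution from the (B)-rows. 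Dropping the (A)-rows first and appealing to the standard fact that appending $\pm$ unit-vector rows preserves TU is also sound. Your argument is arguably a bit cleaner to verify since it avoids the explicit row-reduction bookkeeping, though both proofs exploit the same underlying interval-nestedness of the (C)-constraints within a fixed column. Note that the paper also gives a second, TU-free proof via a perturbation/augmenting-cycle argument, which is philosophically quite different and which your proposal does not touch.
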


	By combining Theorems \ref{thm:independent_characterization}, \ref{thm:int_pt_characterization}, and \ref{thm:relaxation_equivalence} with the fact that $\mathcal{P}(D,\alpha)$ has a polynomial size halfspace description, it follows that $\alpha \in \mathcal{S}_D$ can be decided in $n^{O(1)}$-time. However, this result can be improved. 
	If $D\subseteq [n]^2$ has many identical columns, then many of the flag conditions (C) will look essentially the same. Therefore, our final goal will be to construct a ``compressed" version of $\mathcal{P}(D,\alpha)$ that removes some of the repetitive inequalities.

								A tuple ${\mathcal C}=(m,\{P_k\}_{k=1}^\ell)$ is a \emph{compression} of $D \subseteq [n]^2$ if:
	\begin{itemize}
		\item $m \leq n$ is a nonnegative integer such that $(r,p) \not\in D$ for $r > m$ and $p \in [n]$, and
		
		\item $P=P_1\dot{\cup}\cdots\dot{\cup}P_{\ell} \subseteq [n]$ such that if  $p,p' \in P_k$ then
	\[\{r \in [n] : (r,p) \in D\} = \{r \in [n] : (r,p') \in D\},\]
and moreover if $D$ is nonempty in column $p$ then $p\in P_k$ for some $k\in [\ell]$.
	\end{itemize}

	For a compression ${\mathcal C}$ of $D\subseteq [n]^2$ and 
$\tilde{\alpha} = (\tilde{\alpha}_1,\dots,\tilde{\alpha}_m) \in \mathbb{Z}_{\geq 0}^m$ define
	\begin{align}\label{eqn:compressed_arguments}
		\mathcal{Q}(D,{\mathcal C},\tilde{\alpha}) \subseteq \mathbb{R}^{m\ell}
	\end{align}
	to be the polytope with points of the form $(\tilde{\alpha}_{ik})_{i\in[m],k\in[\ell]}$ satisfying (A')-(C') below.
	
	\begin{enumerate}
		\item[(A')] Column-Injectivity Conditions: For all $i \in [m], k \in [\ell]$,
		\begin{align*}
		0 \leq \tilde{\alpha}_{ik} \leq 1.
		\end{align*}
		
		\item[(B')] Content Conditions: For all $i \in [m]$,
		\begin{align*}
		\sum_{k=1}^{\ell} \#P_k\cdot  \tilde{\alpha}_{ik} = \alpha_i.
		\end{align*}
		
		\item[(C')] Flag Conditions: For all $s \in [m], k \in [\ell]$,
		\begin{align*}
		\sum_{i=1}^s \tilde{\alpha}_{ik} \geq \#\{(i,p_k) \in D : i \leq s, p_k:=\min P_k \}.
		\end{align*}
	\end{enumerate}

	\begin{theorem}\label{thm:compression_equivalence} Let $\alpha = (\alpha_1,\dots,\alpha_n) \in \mathbb{Z}_{\geq 0}^n$ and $\tilde{\alpha} = (\tilde{\alpha}_1,\dots,\tilde{\alpha}_m) := (\alpha_1,\dots,\alpha_m)$. Then
		$\alpha_1 + \dots + \alpha_n = \#D$ and $\mathcal{P}(D,\alpha) \neq \emptyset$ if and only if $\alpha_1 + \dots + \alpha_m = \#D$, $\alpha_{m+1} = \dots = \alpha_n = 0$, and $\mathcal{Q}(D,{\mathcal C},\tilde{\alpha}) \neq \emptyset$.
	\end{theorem}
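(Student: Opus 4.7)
The plan is to prove both directions by explicit construction: averaging in one direction, replicating in the other. Throughout, we use that if $j \notin P := \bigcup_k P_k$, then column $j$ is empty in $D$, so by Proposition~\ref{prop:column_content} and the nonnegativity conditions in (A), every point $(\alpha_{ij}) \in \mathcal{P}(D,\alpha)$ satisfies $\alpha_{ij} = 0$ for all $i$ when $j \notin P$.

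For $(\Rightarrow)$, I would first deduce $\alpha_{m+1} = \cdots = \alpha_n = 0$ from $\mathcal{P}(D,\alpha) \neq \emptyset$. Since $(r,j) \notin D$ for $r > m$, applying the flag condition (C) with $s = m$ to each column $j$ gives $\sum_{i=1}^m \alpha_{ij} \geq \#\{(i,j) \in D\}$. Summing over $j$ and using (B) yields $\sum_{i=1}^m \alpha_i \geq \#D = \sum_{i=1}^n \alpha_i$, forcing the tail entries to vanish. Then, given $(\alpha_{ij}) \in \mathcal{P}(D,\alpha)$, I define the averaged point
\[
\tilde{\alpha}_{ik} = \frac{1}{\lambda_k} \sum_{j \in P_k} \alpha_{ij} \qquad (i \in [m],\, k \in [\ell]).
\]
The bounds (A') follow from $\alpha_{ij} \in [0,1]$. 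For (B'), compute $\sum_k \lambda_k \tilde{\alpha}_{ik} = \sum_{j \in P} \alpha_{ij} = \sum_{j=1}^n \alpha_{ij} = \alpha_i$, where the penultimate equality uses the vanishing of $\alpha_{ij}$ off $P$. For (C'), the averaging compatibility $\sum_{j \in P_k}\#\{(i,j) \in D : i \leq s\} = \lambda_k \#\{(i,p_k) \in D : i \leq s\}$ (by the defining property of $P_k$) combined with (C) gives the desired inequality.

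For $(\Leftarrow)$, I would reverse the construction: given $(\tilde{\alpha}_{ik}) \in \mathcal{Q}(D,\mathcal{C},\tilde{\alpha})$, set
\[
\alpha_{ij} = \begin{cases} \tilde{\alpha}_{ik} & \text{if } i \in [m] \text{ and } j \in P_k, \\ 0 & \text{otherwise.} \end{cases}
\]
Condition (A) is immediate. For (B), the content splits: for $i \leq m$, $\sum_j \alpha_{ij} = \sum_k \lambda_k \tilde{\alpha}_{ik} = \alpha_i$, while for $i > m$ both sides are zero by hypothesis. For (C), fix $s, j$; if $j \notin P$ both sides vanish, and if $j \in P_k$, treat the cases $s \leq m$ and $s > m$ separately. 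When $s \leq m$, the inequality reduces directly to (C') with $p_k$ replaced by $j$, which is fine since columns in $P_k$ have identical support in $D$. When $s > m$, use $\sum_{i=1}^s \alpha_{ij} = \sum_{i=1}^m \tilde{\alpha}_{ik} \geq \#\{(i,p_k)\in D : i \leq m\} = \#\{(i,j) \in D\}$, the last equality because $D$ has no boxes below row $m$.

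No single step looks genuinely subtle; the only conceptual content is keeping track of which indices are zero. The mildest obstacle is making sure the averaged/replicated points actually satisfy the flag conditions uniformly across all $s$, including $s > m$ in the $(\Leftarrow)$ direction and all columns $j \in P_k$ (not just the representative $p_k$) in the $(\Rightarrow)$ direction; both reduce to the fact that $P_k$ is a set of columns with identical intersection with $D$, and to the assumption that $D$ has no boxes past row $m$.
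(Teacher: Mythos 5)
Your proof is correct and follows essentially the same route as the paper's: derive the vanishing of the tail and the equality $\alpha_1 + \cdots + \alpha_m = \#D$ by sandwiching with the flag conditions at $s = m$, then average over each block $P_k$ to pass from $\mathcal{P}(D,\alpha)$ to $\mathcal{Q}(D,\mathcal{C},\tilde\alpha)$ and replicate column values to come back. You are a bit more explicit than the paper about the columns $j\notin P$ (the paper leaves $\alpha_{ij}=0$ there implicit in the $(\Leftarrow)$ construction and in the equality $\sum_{j\in P}\alpha_{ij}=\sum_{j=1}^n\alpha_{ij}$ for $(\Rightarrow)$), but the substance is identical.
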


	\subsection{Deciding membership in the Schubitope}\label{subsec:polyDec}
	
	We use the above results to give a polynomial time algorithm to check if a lattice point is in the Schubitope. This more general result gives a polynomial time algorithm for any polynomial family whose Newton polytopes are Schubitopes.
			Let $D \subseteq [n]^2$, and fix a compression ${\mathcal C}=(m,\{P_k\}_{k=1}^{\ell})$ of $D$.

	\begin{theorem}\label{thm:lp_characterization}
		Let $\alpha = (\alpha_1,\dots,\alpha_n) \in \mathbb{Z}_{\geq 0}^n$. Then $\alpha \in \mathcal{S}_D$ if and only if $\alpha_1 + \dots + \alpha_m = \#D$, $\alpha_{m+1} = \dots = \alpha_n = 0$, and $\mathcal{Q}(D,{\mathcal C},\tilde{\alpha}) \neq \emptyset$, where $\tilde{\alpha} = (\tilde{\alpha}_1,\dots,\tilde{\alpha}_m) := (\alpha_1,\dots,\alpha_m)$.
	\end{theorem}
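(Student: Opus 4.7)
The plan is to derive Theorem \ref{thm:lp_characterization} by chaining together the four equivalences already established in this section. There is no new combinatorial content — each implication is already packaged in a prior result, so the task reduces to verifying that the hypotheses of each can be applied in turn.

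First I would invoke Theorem \ref{thm:independent_characterization} to replace membership $\alpha \in \mathcal{S}_D$ with the nonemptiness of ${\sf PerfectTab}(D,\alpha)$. Next, Theorem \ref{thm:int_pt_characterization} translates this nonemptiness into the pair of conditions $\alpha_1 + \dots + \alpha_n = \#D$ and $\mathcal{P}(D,\alpha) \cap \mathbb{Z}^{n^2} \neq \emptyset$. Then Theorem \ref{thm:relaxation_equivalence} (applied under the already-established hypothesis $\alpha_1 + \dots + \alpha_n = \#D$) removes the integrality requirement, leaving the weaker condition $\mathcal{P}(D,\alpha) \neq \emptyset$. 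At this point we are exactly in position to apply Theorem \ref{thm:compression_equivalence}, which converts the pair ($\alpha_1 + \dots + \alpha_n = \#D$ and $\mathcal{P}(D,\alpha) \neq \emptyset$) into the three assertions $\alpha_1 + \dots + \alpha_m = \#D$, $\alpha_{m+1} = \dots = \alpha_n = 0$, and $\mathcal{Q}(D,\mathcal{C},\tilde{\alpha}) \neq \emptyset$. Composing these four biconditionals yields the theorem.

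The only ``obstacle'' is bookkeeping: one must make sure that the condition $\alpha_1 + \dots + \alpha_n = \#D$ (needed to invoke Theorem \ref{thm:relaxation_equivalence} and Theorem \ref{thm:compression_equivalence}) is in fact guaranteed at the moment each theorem is cited. In the forward direction this is immediate from Theorem \ref{thm:int_pt_characterization}; in the reverse direction it follows because $\alpha_{m+1} = \dots = \alpha_n = 0$ combined with $\alpha_1 + \dots + \alpha_m = \#D$ gives $\alpha_1 + \dots + \alpha_n = \#D$, so we may legitimately back-substitute through the chain. Given how short the argument is, I would simply present it as a four-line citation chain rather than expand it.
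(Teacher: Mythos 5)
Your chain of the four theorems is exactly the paper's proof, which reads simply as a citation of Theorems \ref{thm:independent_characterization}, \ref{thm:int_pt_characterization}, \ref{thm:relaxation_equivalence}, and \ref{thm:compression_equivalence}. Your careful bookkeeping about when the hypothesis $\alpha_1+\dots+\alpha_n=\#D$ is available is a correct unpacking of that citation.
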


	For each $k \in [\ell]$, let $p_k:=\min P_k$ and $R_k({\mathcal C}):= \{r \in [n] : (r,p_k) \in D\} \subseteq [m]$.

	\begin{theorem}\label{thm:poly_time_schubitope}
		Given as input $\{R_k({\mathcal C})\}_{k=1}^{\ell}$, $\{\# P_k\}_{k=1}^{\ell}$, and $\tilde{\alpha} = (\tilde{\alpha}_1,\dots,\tilde{\alpha}_m) \in \mathbb{Z}_{\geq 0}^m$ satisfying $\tilde{\alpha}_1 + \dots + \tilde{\alpha}_m = \#D$, one can decide if $\alpha := (\tilde{\alpha}_1,\dots,\tilde{\alpha}_m,0,\dots,0) \in \mathbb{Z}_{\geq 0}^n$ lies in $\mathcal{S}_D$ in polynomial time in $m$ and $\ell$.
	\end{theorem}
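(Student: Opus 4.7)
The plan is to invoke Theorem \ref{thm:lp_characterization}, which under the hypotheses reduces the question ``$\alpha \in \mathcal{S}_D$?'' to three checks: $\tilde{\alpha}_1 + \cdots + \tilde{\alpha}_m = \#D$ (given), $\alpha_{m+1} = \cdots = \alpha_n = 0$ (built into the definition of $\alpha$), and $\mathcal{Q}(D,\mathcal{C},\tilde{\alpha}) \neq \emptyset$. So the only real task is to decide LP feasibility of $\mathcal{Q}(D,\mathcal{C},\tilde{\alpha})$.

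Next, I would construct the halfspace description of $\mathcal{Q}(D,\mathcal{C},\tilde{\alpha}) \subseteq \mathbb{R}^{m\ell}$ directly from the input data. The defining system consists of the $2m\ell$ box inequalities $0 \leq \tilde{\alpha}_{ik} \leq 1$ from (A'); the $m$ content equalities $\sum_{k=1}^{\ell} \lambda_k \tilde{\alpha}_{ik} = \tilde{\alpha}_i$ from (B'), whose coefficients $\lambda_k$ and right-hand sides $\tilde{\alpha}_i$ are part of the input; and the $m\ell$ flag inequalities from (C'), whose right-hand sides $\#\{(i,p_k) \in D : i \leq s\} = |R_k(\mathcal{C}) \cap [s]|$ can be read off in time polynomial in $m$ from the sets $R_k(\mathcal{C})$. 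Altogether this is an explicit linear system in $m\ell$ variables with $O(m\ell)$ rows, whose encoding length is polynomial in $m$ and $\ell$ (using constant-time arithmetic as assumed in Problem \ref{prob:nonvanishing}).

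I would then appeal to ${\sf LPfeasibility} \in {\sf P}$ via, e.g., the ellipsoid method of Khachiyan (see \cite{Schrijver}), to decide in polynomial time whether this system is feasible. By Theorem \ref{thm:lp_characterization}, this determination is exactly the decision of whether $\alpha \in \mathcal{S}_D$.

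There is no genuine obstacle: all the substantive work has been front-loaded. Theorems \ref{thm:independent_characterization} and \ref{thm:int_pt_characterization} convert membership in $\mathcal{S}_D$ to the existence of a lattice point in $\mathcal{P}(D,\alpha)$; Theorem \ref{thm:relaxation_equivalence} drops the integrality constraint; and Theorem \ref{thm:compression_equivalence} replaces $\mathcal{P}(D,\alpha) \subseteq \mathbb{R}^{n^2}$ by the compressed polytope $\mathcal{Q}(D,\mathcal{C},\tilde{\alpha}) \subseteq \mathbb{R}^{m\ell}$ whose dimension is controlled by $m$ and $\ell$ rather than $n$. Without this compression, the na\"ive LP would have size depending on $n$, which need not be polynomial in $m$ and $\ell$; it is this reduction in ambient dimension, rather than any new feasibility argument, that makes the desired polynomial-time bound possible.
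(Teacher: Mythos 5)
Your proposal is correct and takes essentially the same route as the paper: invoke Theorem \ref{thm:lp_characterization} to reduce the question to feasibility of $\mathcal{Q}(D,\mathcal{C},\tilde{\alpha})$, build the $O(m\ell)$-constraint system explicitly from the input in time polynomial in $m$ and $\ell$, and then appeal to ${\sf LPfeasibility}\in{\sf P}$. Your closing remark about the role of compression is accurate and matches the spirit of Remark \ref{rmk:natural_input}, but is not part of the paper's proof proper.
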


	\section{Application to $D(w)$: proof of Theorems~\ref{thm:SchubertinP} and~\ref{thm:Advetableau}}
 For the specialization to Rothe diagrams $D:=D(w)$, the results of A.~Fink-K.~M\'esz\'aros-A.~St.~Dizier \cite{Fink} imply
  \[\alpha \in \mathcal{S}_{D(w)} \iff c_{\alpha,w}>0.\]
Combining this with Theorem \ref{thm:independent_characterization}, 
\[c_{\alpha,w}>0 \iff {\sf PerfectTab}(D(w),\alpha) \neq \emptyset.\]
Further, if ${\sf PerfectTab}(D(w),\alpha) \neq \emptyset$, we can find $\tau \in {\sf PerfectTab}(D(w),\alpha)$ which is also strictly increasing along columns. Thus ${\sf PerfectTab}_{>}(D(w),\alpha)\neq \emptyset$, and Theorem \ref{thm:Advetableau} follows.

To obtain Theorem \ref{thm:SchubertinP} we apply the results of Section \ref{subsctn:polytopes} to $D(w)$. Suppose ${\sf code}(w)=(c_1,\ldots,c_L)$. Let $\sigma\in S_L$ be such that 
\[w(\sigma(1))<w(\sigma(2))<\ldots< w(\sigma(L)).\] Set $w(\sigma(0)):=0$. 
The key lemma we need is:

\begin{lemma}\label{prop:colTypeRothe}
For $1\leq h\leq L$, and for all 
\[j_1,j_2\in\{w(\sigma(h-1))+1,w(\sigma(h-1))+2,\ldots,w(\sigma(h))-1\},\]
we have $(i,j_1)\in D(w)$ if and only if $(i,j_2)\in D(w)$.
\end{lemma}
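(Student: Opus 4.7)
The plan is to unfold the definition of $D(w)$ and show that for $i\leq L$ the ``row'' condition $i<w^{-1}(j)$ becomes automatic, while for $i>L$ the row of $D(w)$ is empty. Recall
\[
(i,j)\in D(w)\iff j<w(i) \text{ and } i<w^{-1}(j).
\]

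First, I would observe the key combinatorial fact: since $w(\sigma(h-1))$ and $w(\sigma(h))$ are consecutive elements of the sorted set $\{w(1),\dots,w(L)\}$, any $j$ strictly between them satisfies $j\notin\{w(1),\dots,w(L)\}$. Hence $w^{-1}(j_1)>L$ and $w^{-1}(j_2)>L$.

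Next I would check the equivalence row by row. For $i\leq L$, both $i<w^{-1}(j_1)$ and $i<w^{-1}(j_2)$ hold automatically by the observation above, so
\[
(i,j_k)\in D(w)\iff j_k<w(i),\quad k=1,2.
\]
Now $w(i)$ equals $w(\sigma(g))$ for some $g\in[L]$, so either $w(i)\leq w(\sigma(h-1))$ or $w(i)\geq w(\sigma(h))$. In the first case $j_k\geq w(\sigma(h-1))+1>w(i)$, so $j_k<w(i)$ fails for both $k$; in the second case $j_k\leq w(\sigma(h))-1<w(i)$, so it holds for both. Either way, the two memberships coincide.

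For $i>L$, since ${\sf code}(w)=(c_1,\dots,c_L)$ with $c_L>0$ is truncated at the last nonzero entry, row $i$ of $D(w)$ contains no boxes, and therefore neither $(i,j_1)$ nor $(i,j_2)$ lies in $D(w)$. Combining the two cases gives the lemma. I do not anticipate a genuine obstacle here: the proof is a direct unfolding of the definition of the Rothe diagram, with the only subtlety being to articulate precisely why $j_1,j_2$ miss $\{w(1),\dots,w(L)\}$, which follows immediately from the definition of $\sigma$.
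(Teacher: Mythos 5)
Your proof is correct, and it takes a genuinely different route from the paper's. The paper argues by induction on the number of rows $k$: it strengthens the claim to an inductive statement about the first $k$ rows of $D(w)$ and the partially-sorted values $u_1^{(k)}<\dots<u_k^{(k)}$, with the inductive step reasoning geometrically about how the $\bullet$ in row $k+1$ and its eastward ray alter the columns. You instead unfold the definition $D(w)=\{(i,j):j<w(i),\ i<w^{-1}(j)\}$ directly. The key observation you use — that any $j$ strictly between consecutive elements $w(\sigma(h-1)), w(\sigma(h))$ of the sorted list lies outside $\{w(1),\dots,w(L)\}$, forcing $w^{-1}(j)>L$ — collapses the condition $i<w^{-1}(j)$ to a tautology for $i\le L$ and reduces membership to $j<w(i)$, which you then dispatch by a dichotomy on whether $w(i)\le w(\sigma(h-1))$ or $w(i)\ge w(\sigma(h))$. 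Your handling of $i>L$ via $c_L>0$ and the truncation convention correctly closes the remaining case. The direct computation is arguably cleaner: it avoids the inductive bookkeeping and makes explicit which part of the definition is doing the work, at the cost of losing the geometric picture of rays sweeping out columns that the paper's induction emphasizes.
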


Using Lemma \ref{prop:colTypeRothe}, there exists a compression ${\mathcal C}=(L,\{P_k\}_{k=1}^{\ell})$ of $D(w)$
where $\ell\leq 2L$. With the following statement, Theorem \ref{thm:poly_time_schubitope} proves Theorem \ref{thm:SchubertinP}.

\begin{proposition}\label{prop:comprRothePoly}
There exists an $O(L^{2})$-time algorithm to compute
${\mathcal C}, \{\# P_k\}_{k=1}^{\ell}$, and $\{R_k\}_{k=1}^{\ell}$ from the input ${\sf code}(w)=(c_1,\ldots,c_L)$. 
\end{proposition}

\acknowledgements{We acknowledge Mathoverflow, 
S.~Kintali's blog ``My brain is open,'' and R.~O'Donnell's Youtube videos (from his class at Carnegie Mellon)
for background. We thank Philipp Hieronymi, Alexandr Kostochka, Cara Monical, Erik Walsberg, Douglas West, Alex-ander Woo, and the anonymous referees for their helpful comments.}

\printbibliography
\end{document}